\newtheorem{theorem}{Theorem}
\newtheorem{prop}[theorem]{Proposition}
\newtheorem{lemma}[theorem]{Lemma}
\newtheorem{remark}[theorem]{Remark}
\newtheorem{conj}[theorem]{Conjecture}
\numberwithin{equation}{section} \numberwithin{theorem}{section}
\newcommand{\R}{\mathbb{R}}
\newcommand{\N}{\mathbb{N}}
\newcommand{\B}{\mathbb{B}}
\newcommand{\la}{\langle}
\newcommand{\ra}{\rangle}
\newcommand{\h}{\mathcal{H}}
\newcommand{\A}{\mathcal{A}}
\newcommand{\emb}{\hookrightarrow}
\newcommand{\In}{\subset}
\newcommand{\Om}{\Omega}
\newcommand{\om}{\omega}
\newcommand{\dl}{{\delta}}
\newcommand{\Dl}{{\Delta}}
\newcommand{\ta}{{\theta}}
\newcommand{\al}{{\alpha}}
\newcommand{\ed}{{\rm d}}
\newcommand{\D}{{\nabla}}
\newcommand{\Db}{{\nabla^{\bot}}}
\newcommand{\ti}[1]{{\tilde{#1}}}
\newcommand{\eps}{{\varepsilon}}
\newcommand{\fr}[2]{\frac{#1}{#2}}
\newcommand{\sm}{{\setminus}}
\newcommand{\T}{{\rm T}}
\newcommand{\p}{\rho}
\newcommand{\pl}[2]{{\frac{\partial #1}{\partial #2}}}
\newcommand{\zb}{\overline{z}}
\newcommand{\db}{\overline{\partial}}
\newcommand{\de}{\partial}
\newcommand{\nn}{\nonumber}
\newcommand{\mint}{\mathop{\int\hspace{-1.05em}{\--}}\nolimits}
\def\Xint#1{\mathchoice
   {\XXint\displaystyle\textstyle{#1}}%
   {\XXint\textstyle\scriptstyle{#1}}%
   {\XXint\scriptstyle\scriptscriptstyle{#1}}%
   {\XXint\scriptscriptstyle\scriptscriptstyle{#1}}%
   \!\int}
\def\XXint#1#2#3{{\setbox0=\hbox{$#1{#2#3}{\int}$}
     \vcenter{\hbox{$#2#3$}}\kern-.5\wd0}}
\def\dashint{\Xint-}
\begin{document}
\title[Global estimates and energy identities]{Global estimates and energy identities for elliptic systems with antisymmetric potentials}
\author{Tobias Lamm}
\address[T.~Lamm]{Institute for Analysis, Karlsruhe Institute of Technology (KIT), Englerstr. 2, 76131 Karlsruhe, Germany}
\email{tobias.lamm@kit.edu}
\author{Ben Sharp}
\address[B.~Sharp]{Imperial College London, Huxley Building, 180 Queen's Gate, SW7 2AZ, London, UK}
\email{ben.g.sharp@gmail.com}


\subjclass[2000]{}

\begin{abstract}
We derive global estimates in critical scale invariant norms for solutions of elliptic systems with antisymmetric potentials and almost holomorphic Hopf differential in two dimensions. Moreover we obtain new energy identities in such  norms for sequences of solutions of these systems. The results apply to harmonic maps into general target manifolds and surfaces with prescribed mean curvature. In particular our results confirm a conjecture of Rivi\`ere in the two-dimensional setting.
\end{abstract}

\maketitle

{\bf MSC classification:} 35A23, 35B33, 53C42, 58E20
\section{Introduction}
Harmonic maps are one of the most studied solutions to a geometric partial differential equation. They are critical points of the Dirichlet energy
\[
E(u)=\frac12 \int_{M} |\nabla u|^2 dv_g,
\]
where $u\in W^{1,2}(M,N)$ is a map between between two Riemannian manifolds $(M^m,g)$ and $(N^l,h)$ and where we assume that $(N^l,h)$ is isometrically embedded in some euclidean space $\R^n$. The elliptic system satisfied by harmonic maps is 
\begin{align}
-\Delta u =A(u)(\nabla u, \nabla u), \label{harmonic}
\end{align}
where $A$ is the second fundamental form of the embedding $N\hookrightarrow \R^n$. When $m=2$ the Dirichlet energy, and thus harmonic maps, are invariant under conformal transformations of $M$ therefore this situation is of particular interest.  

It was discovered by H\'elein \cite{Helein} that in the local situation $M=B^m_1 \subset \R^m$ and $N=S^{n-1}$ the system \eqref{harmonic} can be written as 
\begin{align}
-\Delta u = \Db B \cdot \nabla u,
\end{align}
where $\Db B \in L^2(B_1^m,so(n)\otimes \bigwedge^1 \R^m)$ and where $\Db B \cdot \D u$ denotes an inner product of one forms coupled with matrix multiplication. In this situation $\text{div}\, \Db B =0$ weakly, and an extension of results due to Wente \cite{Wente} and M\"uller \cite{Mu} by Coifman et al. \cite{CLMS} allows to conclude that the right hand side $\Db B \cdot \D u$ is in the local Hardy space $\mathcal{H}^1_{loc}(B_1^m,\R^n)$, which is a strict subspace of $L^1(B_1^m,\R^n)$. In particular the improved regularity of the right hand side allows to conclude the global \emph{non-linear} estimate
\begin{align}
\| \nabla^2 u \|_{L^1_{loc}(B_1^m,S^{n-1})} \le C E(u).\label{globalsphere}
\end{align}

When $m=2$ one infers from \eqref{globalsphere}, that $u\in C^0_{loc}(B_1^2,S^{n-1})$ and higher regularity then follows from standard results on elliptic systems. H\'elein showed that two-dimensional harmonic maps into general closed target manifolds are also smooth by rewriting the equation \eqref{harmonic} using a so called Coulomb frame, see \cite{helein91}, \cite{helein02}. We would like to remark that the construction of the Coulomb frame in this setting requires the target manifold to have a trivial tangent bundle, which (via a technical result) one may assume without loss of generality for sufficiently smooth targets. 

For general closed target manifolds $N$ it was shown by Rivi\`ere \cite{Riv1} that the equation \eqref{harmonic} can be written in the form 
\begin{align}
-\Delta u = \Om \cdot \nabla u,\label{omega}
\end{align} 
with $\Om \in L^2(B_1^m,so(n)\otimes \bigwedge^1 \R^m)$ but it is no longer true that $\text{div}\, \Omega=0$, however $\Om$ remains anti-symmetric. For two-dimensional domains Rivi\`ere even showed that every critical point of a conformally invariant variational integral which is quadratic in the gradient satisfies an equation of the form \eqref{omega}.
His main result, an extension of H\'elein's regularity result, was that every weak solution of \eqref{omega} with small $L^2$-norm of $\Omega$ is as smooth as allowed by $\Omega$ - and in particular continuous. He obtained this result by deriving a conservation law which satisfies compensation properties via perturbing a Coulomb frame approach to studying \eqref{omega}. We remark that the anti-symmetry of $\Om$ is crucial to the improved regularity, moreover that this method applies to the harmonic maps setting without requiring any condition on the target manifold other than it being a $C^2$ sub-manifold of some Euclidean space. 

We further remark that \eqref{globalsphere} does not hold in general for solutions of \eqref{omega}, without the assumption that $\text{div}\, \Omega =0$ - for a counterexample see \cite{LR}.

Using a small energy assumption several interesting energy convexity, uniqueness and higher regularity results for harmonic maps and certain solutions of \eqref{omega} have recently been obtained in \cite{CM1}, \cite{lammlin}, \cite{sharp} and \cite{ST} using differing techniques. 

For $m\ge 3$ a partial regularity result for minimising harmonic maps was shown to hold by Schoen and Uhlenbeck \cite{SU}. For stationary harmonic maps this was proved by Evans \cite{Evans} for $N=S^{n-1}$ and by Bethuel \cite{bethuel} for general closed  target manifolds, using again the Coulomb frame. All of these results were generalised by Rivi\`ere and Struwe \cite{RivStruwe} by studying the system \eqref{omega} under appropriately motivated assumptions. 

Going back to harmonic maps for arbitrary closed targets $N$ a linear estimate of the form
\begin{align}
\| \nabla^2 u \|_{L^1_{loc}(B^m_1,N)} \le C(N) \sqrt{E(u)}
\end{align}  
is known to hold for weakly harmonic maps when $m=2$ (resp. weakly stationary harmonic maps for $m\geq 3$) but requires the Dirichlet energy to be small and hence one cannot directly conclude a global estimate which only depends on the energy of $u$. For arbitrary $u$ and $m=2$, one might try using a naive approach to cover a compact subset of $B_1^2$ by balls in which the energy is small, yielding a global estimate for any fixed solution, but with a constant depending on the inverse of the smallest radius used in the covering. By considering a sequence of harmonic maps where the energy concentrates, which is possible due to the conformal invariance of $E$, one readily observes that this smallest radius has to tend to zero and hence a global estimate obtained in this way degenerates along the sequence.

One of the main results of the present paper is to confirm the two-dimensional case of the following \textit{conjecture of Rivi\`ere} (see page $9$ in \cite{Riv1}): for every harmonic map  $u:B_2^m\to N \hookrightarrow \R^n$ with $E(u)\le \Lambda$, there exists a constant $C=C(m,N,\Lambda)$ so that
\begin{align}
\| \D^2 u\|_{L^1(B_1^m,N)} \le C. \label{estriv}
\end{align}
In the present paper we confirm this conjecture for $m=2$, and closed target manifolds $N$ of class $C^2$. We actually prove such a global estimate for general (approximate) solutions $u$ of \eqref{omega} whose Hopf differential $\phi$ is suitably controlled or almost holomorphic (see Theorems \ref{main theorem} and \ref{corollary theorem} for details). Recall that for a map $u\in W^{1,2}(B_1^2,N)$ the Hopf differential is defined by
$$\phi:= |u_x|^2 - |u_y|^2 - 2i\la u_x,u_y\ra\in L^1(B_1).$$
It follows from Rivi\`ere's regularity results that $\phi$ is weakly differentiable when $u$ solves \eqref{omega}, and a calculation shows that $\phi$ is holomorphic for every critical point of a conformally invariant variational integral which is quadratic in the gradient, and hence our result applies to all these critical points.
In particular, the global estimate holds for (approximate) harmonic maps into general target manifolds and conformally parametrised surfaces in $N$ of prescribed mean curvature $H\in L^\infty$ and even with $H\in L^2$. 
We remark that the special case of harmonic maps into a two-dimensional target manifold $N^2$ which is not diffeomorphic to $S^2$ has been solved previously by Rivi\`ere, see Theorem I.7 in \cite{Riv1}. 

For stationary harmonic maps for $m\geq 2$ into target manifolds which do not carry harmonic spheres $S^p$, $2\le p \le m$, Lin \cite{lin} showed a global estimate of the form
\[
\| \nabla u \|_{L^\infty(M,N)} \le C(M,N,E(u)).
\] 
This results applies in particular to target manifolds $N$ whose universal cover $\tilde N$ supports a pointwise convex function, as was also shown by Lin.

We note that for energy minimising harmonic maps into general target manifolds and in arbitrary dimensions, Cheeger and Naber \cite{cheegernaber} recently showed that there exists a number $p>2$ so that $u\in W^{1,p}\cap W^{2,p/2}(B_{1/2},N)$ with uniform bounds. This was improved in \cite{NV} where uniform bounds for $\nabla u$ in $L^{3,\infty}$ and $\nabla^2 u$ in $L^{3/2,\infty}$ were derived.

The importance of the estimate \eqref{estriv} stems from its applications in proving the so called energy identity during the bubbling process, which naturally occurs when one studies sequences of critical points of conformally invariant functionals or their corresponding gradient flows. A ``bubble" is formed when a certain threshold of energy concentrates on shrinking discs along a sequence - see \cite{SacksUhlenbeck} for a first description of this phenomenon in the context of harmonic maps. In fact the bubbling process can be thought of as a covering-type argument where one attempts to keep track of potential energy concentration along sequences of solutions - the difficulty then being to gain a suitable estimate on each component of this covering - which consists of shrinking discs, their complement (which is a multiply connected domain) and an intermediate region formed of degenerating annuli. Good control is obtained locally on the shrinking discs and multiply connected domains via the $\eps$-regularity results of H\'elein or Rivi\`ere. Thus the only place left to control is the degenerating annuli, or connecting neck regions. 
In order to explain this further we need to introduce two more function spaces, the Lorentz spaces $L^{2,1}$ and $L^{2,\infty}$, the latter one is also called the weak-$L^2$ or Marcinkiewicz-space. For $U\In \R^2$ and measurable $f$ let $\lambda_f(s):=  |\{x\in U : |f(x)|>s\}|$ which is non-increasing in $s> 0$. We define:
$$
L^{2,1}(U)= \{ f : \int_0^{\infty} \lambda_f(s)^{\fr12} \ed s < \infty \}\,\,\,\,\text{and}\,\,\,\,
L^{2,\infty}(U)= \{ f:\sup_{s> 0} s^2\lambda_f(s)<\infty\}
$$
and we note that $L^{2,1}(U) \In L^2(U) \In L^{2,\infty}(U)$. This can be easily checked once one considers 
$$\|f\|_{L^2(U)}^2 = \int_0^{\infty} 2s\lambda_f (s)\ed s.$$
There are norms associated with the above spaces which are equivalent to the quantities appearing in the definition. Moreover it should be at least intuitively clear from the definition that 
\begin{align}
\left|\int_U fg \right|\leq \|f\|_{L^{2,1}(U)}\|g\|_{L^{2,\infty}(U)}\label{duality},
\end{align} 
which can be summed up by $(L^{2,1})^{\ast} = L^{2,\infty}$, see \cite{hunt66}.  

Going back to the bubbling picture, when studying a sequence of say harmonic maps $u_k$ from two-dimensional domains with uniformly bounded energy 
 a certain amount of energy can concentrate at finitely many points and disappear when taking weak limits $u_k\rightharpoonup u$. By performing suitable re-scalings (called blow-up's) one can recapture this lost energy and re-discover it as the energy of a non-trivial harmonic map from $\R^2 \to N$ (a so called bubble). 
Away from the finitely many points where the energy concentrates one concludes from standard small-energy regularity results that the maps $u_k$ are very close to the weak limit $u$ and in small degenerating balls around the energy concentration points, $u_k$ is very close to the bubbles. There is some intermediate region between the two sets on which we have good control on the $u_k$'s and it can be shown to consist of degenerating annuli, i.e. annuli for which the quotient of the outer radius divided by the inner radius diverges. 
In order to show that there is no unaccounted energy loss (i.e the energy identity) in this process, one has to show that the energy converges to zero on these degenerating annuli. Now it follows from standard small energy regularity results that the $L^{2,\infty}$-norm of $\nabla u_k$ has to tend to zero on the annuli. Hence, using \eqref{duality}, it remains to derive a uniform bound on the $L^{2,1}$-norm of $\nabla u_k$ on the annuli and this fact indeed follows from \eqref{estriv} using an extension of the classical Sobolev embedding theorem (see e.g. \cite{Helein}). To our knowledge Lin and Rivi\`ere \cite{lin01}, \cite{lin01a}, \cite{lin02b} were the first ones to observe the importance of the duality \eqref{duality} in this setting. Indeed, in \cite{lin02b} they used this idea to derive a type of energy identity for sequences of stationary harmonic maps from higher dimensional domains into spheres and an extension of \eqref{estriv} to this setting would have direct applications to obtaining a corresponding result for general targets.
We note that the same idea has later been used, see \cite{LiZhu}, \cite{lamm06}, \cite{lin02}, \cite{Zhu} in the setting of harmonic maps, \cite{riviere02} for Yang-Mills fields and \cite{bernardriviere} for Willmore surfaces. 

Since this bubbling process is a crucial ingredient in our argument, we include a detailed presentation of it in the appendix. Similar results can be found in \cite{DingTian}, \cite{Parker}, \cite{LR} and \cite{bernardriviere}.

Our second main result consists of energy identities for sequences of approximate solutions of \eqref{omega} with almost holomorphic Hopf differentials (see Theorem \ref{EIL21}). These results extend previous work of \cite{DingTian}, \cite{duzaar98}, \cite{Jost2DBook}, \cite{LiZhu}, \cite{LinWang}, \cite{lin02}, \cite{Parker}, \cite{qing95}, \cite{WangPS} and \cite{zheng}, in which various versions of the energy identity have been proved for sequences of approximate harmonic maps or other systems of the type \eqref{omega}.
We also want to mention that Laurain and Rivi\`ere \cite{LR} recently showed an energy identity for the angular derivative of sequences of solutions of \eqref{omega} without assuming a condition on the Hopf differential. Moreover, they constructed a counterexample which shows that the full energy identity cannot be true without additional assumptions, such as the almost holomorphicity of the Hopf differential.

We also highlight that our main supporting Theorem \ref{secder} yields an energy identity in terms of the $L^{2,1}$ norms of the gradients and as a direct consequence we conclude that the no-neck property holds, i.e. the weak limit $u$ and all the bubbles are connected without necks. This fact generalises the results of \cite{ChenTian}, \cite{QingTian} and \cite{Zhu} to our more general setting.
Imposing a structural condition on $\Omega$, which is for example satisfied by solutions of \eqref{harmonic}, we also derive an energy identity in terms of the $L^1$-norms of the second derivatives.

Finally we study solutions to \eqref{omega} on Riemann surfaces under the condition that the Hopf differential is holomorphic. We draw attention to the fact that such solutions are conformally invariant which allows for a general study of these solutions on a sequence of potentially degenerating Riemann surfaces, similarly to the harmonic map setting \cite{Zhu_M}. We do not go into full details here, however the critical analysis on a degenerating Riemann surface is along conformally long cylinders - or degenerating annuli, for which our main supporting Theorem \ref{secder} can be applied.  We also link this to the study of $W^{2,2}$ conformal immersions (see \cite{Kuwert_Li}, \cite{LR2}).  

An outline of the paper is as follows: In section 2 we state our main results and the most important supporting result. In section 3 we derive estimates relating the radial derivative of a map and its Hopf differential together with the angular derivative. We also extend various results on harmonic functions and Wente-type equations on annuli of \cite{LR} to our setting. The proof of the global estimate is contained in section 4 and in section 5 we prove the energy identities. In section 6 we make some remarks on the equation \eqref{omega} on Riemann surfaces. The bubbling argument which is crucial to us can be found in the appendix.\\

{\bf{Acknowledgements}} The second author was funded by Andr\'e Neves' European Research Council STG agreement number P34897 during the writing of this paper. He would also like to thank Karlsruhe Institute of Technology for their kind hospitality during the preliminary stages of this project.

\section{Results}
Our first main result is the global $W^{2,1}$-estimate for solutions of \eqref{omega} under further control on the Hopf differential. 
\begin{theorem}\label{main theorem}
Let $B_1\In \R^2$ be the unit ball and consider $u\in W^{1,2}(B_1,\R^n)$, $f \in L\log L(B_1,\R^n)$ and $\Om \in L^2(B_1,so(n)\otimes \bigwedge^1 \R^2)$ solving 
\begin{eqnarray*}
-\Dl u = \Om \cdot \D u + f. 
\end{eqnarray*}
We will also assume that the Hopf differential 
$$\phi:= |u_x|^2 - |u_y|^2 - 2i\la u_x,u_y\ra\in L^1(B_1)$$ satisfies 
$|\phi|^{\fr12}\in L^{2,1}_{loc}(B_1)$. 
Then for every compact subset $K\subset B_1$, there exists some $$C=C(K,\|\Om\|_{L^2(B_1)}, \|\D u\|_{L^2(B_1)}, \|f\|_{L\log L(B_1)}, \||\phi|^{\fr12}\|_{L^{2,1}(K)})<\infty$$ such that 
$$\|\D^2 u\|_{L^{1}(K)} +\| \D u \|_{L^{2,1}(K)}\leq C.$$ 
\end{theorem}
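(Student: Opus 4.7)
The plan is to combine Rivi\`ere's local $\eps$-regularity theory for \eqref{omega} with the bubble decomposition described in the appendix and, as the essential new ingredient, the estimates relating the radial derivative, the angular derivative and the Hopf differential developed in section 3. Fix $K\In B_1$ and a slightly larger compact $K'$ with $K\In K'\In B_1$. The bubbling procedure of the appendix, applied to the single map $u$, will produce a finite covering of $K'$ by three kinds of pieces: ``body'' balls on which the Dirichlet energy is below Rivi\`ere's $\eps$-regularity threshold, finitely many ``bubble'' balls around points of energy concentration which after rescaling again decompose into small-energy pieces, and ``neck'' annuli $A_j=B_{R_j}(x_j)\sm B_{r_j}(x_j)$ with $R_j/r_j$ potentially large. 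The total number of bodies and bubbles is controlled in terms of $\|\D u\|_{L^2(B_1)}$ and $\|\Om\|_{L^2(B_1)}$, and the $L^2$-energy of $\D u$ on each neck can be made arbitrarily small after slightly adjusting the radii.

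On every body or rescaled bubble ball, Rivi\`ere's conservation law together with the Coifman--Lions--Meyer--Semmes Hardy-space estimates for equations of the form \eqref{omega} with small $\|\Om\|_{L^2}$ yield a local bound of the form
\begin{align*}
\|\D^2 u\|_{L^1(B_{\p/2})}+\|\D u\|_{L^{2,1}(B_{\p/2})}\leq C\bigl(\|\D u\|_{L^2(B_\p)}+\|f\|_{L\log L(B_\p)}\bigr),
\end{align*}
so the finitely many pieces of this type contribute a bounded amount, with a constant depending only on the quantities allowed in the statement. The genuinely non-trivial contribution will come from the necks: although the small-energy $\eps$-regularity forces $\|\D u\|_{L^{2,\infty}(A_j)}$ to be small, the constant appearing in any direct $L^{2,1}$ bound for $\D u$ on $A_j$ degenerates as $R_j/r_j\tu\infty$, so the naive covering argument fails exactly here.

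The main obstacle, and the reason the Hopf differential hypothesis is indispensable, is to upgrade this $L^{2,\infty}$ smallness on the necks to $L^{2,1}$ control. The plan is to work in polar coordinates around $x_j$ and exploit the pointwise identity
\begin{align*}
|u_r|^2=r^{-2}|u_\ta|^2+\mathrm{Re}(e^{2i\ta}\phi),
\end{align*}
which dominates the radial derivative by $r^{-1}|u_\ta|$ plus $|\phi|^{1/2}$. The $L^{2,1}(A_j)$-norm of the radial contribution is then directly controlled by the hypothesis $\||\phi|^{1/2}\|_{L^{2,1}(K')}$, while the angular piece $r^{-1}u_\ta$ on long cylinders is handled by the harmonic-function and Wente-type decomposition on annuli developed in section 3 as an extension of \cite{LR} to the present setting. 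Combining these two ingredients will yield a bound on $\|\D u\|_{L^{2,1}(A_j)}$ independent of $R_j/r_j$, after which a further Wente/Hardy-space argument applied to \eqref{omega} on $A_j$ upgrades this to the corresponding $L^1$ bound on $\D^2 u$. Summing the body, bubble and neck contributions over the finitely many pieces of the decomposition then gives the asserted global estimate on $K$.
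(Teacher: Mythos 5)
Your strategy is essentially the paper's: a body/bubble/neck decomposition, $\varepsilon$-regularity on the body and bubble pieces, and on the neck annuli the combination of the Hopf-differential inequality (your polar identity is exactly Proposition \ref{hopfest}), the Laurain--Rivi\`ere $L^{2,1}$ bound for the angular derivative (Theorem \ref{lau-riv}), and the harmonic/Wente estimates on degenerating annuli to reach the $L^1$ bound on $\nabla^2 u$ --- which is precisely the content of Theorem \ref{secder}. The one structural difference is how you produce the decomposition: you propose to apply the bubbling procedure of the appendix directly to the single map $u$, whereas the paper argues by contradiction, extracting a sequence $(u_k,\Omega_k,f_k)$ violating the uniform bound, precisely because the appendix's construction is a statement about sequences (point-scale sequences with $\rho_k\to 0$, ratios $S_k\to\infty$, discarded subsequences). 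For a fixed map you would instead need a stopping-time covering lemma yielding at most $\Lambda/\varepsilon$ pieces, each either a ball on which $\|\Omega\|_{L^2}$ is below the threshold or an annulus on which $\|\Omega\|_{L^2}$ is small on every dyadic sub-annulus; this is plausible but is not supplied by the appendix as written, and the contradiction argument is the inexpensive way to avoid writing it. Two smaller corrections: the smallness that triggers both the $\varepsilon$-regularity of \cite{ST} and the hypothesis of Theorem \ref{secder} is smallness of $\|\Omega\|_{L^2}$, not of the Dirichlet energy of $u$ (in the generality of Theorem \ref{main theorem} there is no pointwise control of $\Omega$ by $\nabla u$), and smallness of $\|\nabla u\|_{L^{2,\infty}}$ on the necks is irrelevant for this theorem (it only matters for the energy identities); moreover the $L^1$ estimate for $\nabla^2 u$ on a neck is not merely ``a further Wente/Hardy-space argument'' --- it runs through Rivi\`ere's gauge $A,B$ with $\nabla A-A\Omega=\nabla^\perp B$, a Hodge decomposition of $A\nabla u$, Lemma \ref{wente} and Proposition \ref{harm}, so in your write-up you should invoke Theorem \ref{secder} as a whole rather than treat that step as routine.
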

\begin{remark}
\begin{itemize}
\item[1)] Recall that the space $L\log L(B_1,\R^n)$ is defined by
\[
L \log L(B_1,\R^n):= \{ f:B_1\to\R^n | \, \int_{B_1} |f(x)| \log (2 +|f(x)|)dx < \infty\}.
\]
Hence the condition that $f\in L\log L$ can be thought of as a borderline between $f\in L^1$ and $f\in L^p$ for $p>1$.
\item[2)] We note again that this result cannot be deduced from standard small energy regularity results together with a covering argument, since in this case the constant would also depend on the inverse of the infimum of all radii such that the small energy regularity result is applicable. But this infimum can be arbitrary small by considering a sequence of solutions of the above system which allows bubbling.
\item[3)]
Notice that, \textit{a-priori}, $|\phi|^{\fr12} \in L^2(B_1)$ so we really do require more regularity for the Hopf differential than is given by the assumptions on $u$. However this improved regularity for $\phi$ is easily obtained in the vast majority of situations - for instance if the Hopf differential is almost holomorphic, see  Proposition \ref{omegaperp}. 
\item[4)] As already stated, this theorem is not true without the extra control on $\phi$ - see \cite{LR}. 
\item[5)] An open question here is whether one can replace $L\log L$ by the local Hardy space $\h^1_{loc}$ and still get $W^{2,1}$ control - even in the case that $\|\Om\|_{L^2}$ is small. 
\end{itemize}
\end{remark}
A corollary of the above theorem is the following: 
\begin{theorem}\label{corollary theorem}
Let $B_1\In \R^2$ be the unit ball and consider $u\in W^{1,2}(B_1,\R^n)$, $f,g \in L^2(B_1,\R^n)$ and $\Om \in L^2(B_1,so(n)\otimes \bigwedge^1 \R^2)$ solving 
\begin{eqnarray}
-\Dl u &=& \Om \cdot \D u + f \label{first} \\
0&=& \Om \cdot \Db u + g.  \label{second}
\end{eqnarray}
Then for every compact subset $K\subset B_1$, there exists some 
\[
C=C(K,\|\Om\|_{L^2(B_1)}, \|\D u\|_{L^2(B_1)}, \|f\|_{L^2(B_1)}, \|g\|_{L^2(B_1)})<\infty
\] 
such that 
$$\|\D^2 u\|_{L^{1}(K)} +\| \D u \|_{L^{2,1}(K)}\leq C.$$ 
\end{theorem}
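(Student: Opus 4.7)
The strategy is to reduce to Theorem~\ref{main theorem} by verifying its hypotheses. On the bounded domain $B_1$ one has $L^2(B_1)\In L\log L(B_1)$, so $f\in L\log L(B_1)$ is automatic; the only non-trivial condition to verify is $|\phi|^{\fr12}\in L^{2,1}_{\loc}(B_1)$. The plan is first to use \eqref{second} to show that the Hopf differential is almost holomorphic in the quantitative form $\db\phi\in L^1(B_1)$, and then to upgrade this to the required Lorentz regularity of $|\phi|^{\fr12}$.

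For the first step, write $\phi = 4\,\de u\cdot\de u$ using the Euclidean inner product on $\R^n$ extended complex-bilinearly to $\C^n$. Since $\de\db = \fr{1}{4}\Dl$, we have $\db\phi = 2\,\de u\cdot\Dl u$. The antisymmetry of $\Om = \Om_1\,dx+\Om_2\,dy$ (i.e., $w\cdot\Om_j w = 0$ for any $w\in\C^n$) combined with a short calculation that reduces each side below to a multiple of $\langle u_x,(\Om_1-i\Om_2)u_y\rangle$ yields the key algebraic identity
$$\de u\cdot(\Om\cdot\D u) = \pm i\,\de u\cdot(\Om\cdot\Db u),$$
the sign depending on the sign convention for $\Db=\nabla^\perp$. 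Substituting \eqref{first} and \eqref{second} then gives
$$\db\phi = -2\,\de u\cdot f \mp 2i\,\de u\cdot g,$$
which lies in $L^1(B_1)$ by Cauchy--Schwarz with norm controlled by $\|\D u\|_{L^2}(\|f\|_{L^2}+\|g\|_{L^2})$.

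For the second step, decompose $\phi = T(\db\phi)+\phi_{\mathrm{hol}}$ on a relatively compact subdomain, where $T$ denotes the Cauchy transform and $\phi_{\mathrm{hol}}$ is holomorphic in $B_1$. The weak-type estimate $\|Th\|_{L^{2,\infty}}\lesssim\|h\|_{L^1}$ (classical, since $1/z\in L^{2,\infty}_{\loc}$) places $T(\db\phi)$ in $L^{2,\infty}_{\loc}$, while $\phi_{\mathrm{hol}}$ is locally bounded. Hence $\phi\in L^{2,\infty}_{\loc}(B_1)$, so $|\phi|^{\fr12}\in L^{4,\infty}_{\loc}\In L^{2,1}_{\loc}$ on bounded subsets (via the embedding $L^{p_2,q_2}\In L^{p_1,q_1}$ for $p_2>p_1$ on bounded domains). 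Theorem~\ref{main theorem} then yields the required estimates. The main obstacle is the algebraic computation of the first step: one must leverage both the antisymmetry of $\Om$ and the compatibility condition \eqref{second} in just the right way to cancel all $\Om$-terms from $\db\phi$. Once this almost-holomorphicity of $\phi$ is in hand, the passage to $|\phi|^{\fr12}\in L^{2,1}_{\loc}$ and the invocation of Theorem~\ref{main theorem} are standard.
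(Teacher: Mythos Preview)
Your proposal is correct and follows essentially the same route as the paper: the paper's proof simply invokes Proposition~\ref{omegaperp}, whose content is exactly your two steps (compute $\db\phi=-2\la f+g,u_z\ra\in L^1$ via antisymmetry of $\Om$ and \eqref{second}, then pass from $\phi\in L^{2,\infty}_{\loc}$ to $|\phi|^{1/2}\in L^{4,\infty}_{\loc}\In L^{2,1}_{\loc}$), and then applies Theorem~\ref{main theorem}. Your use of the Cauchy transform is just an explicit realisation of what the paper calls ``elliptic regularity theory'' for the $\db$-operator.
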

In order to prove this theorem, we show in section 4, that under these assumptions $\db \phi \in L^1(B_1)$, from which we derive the necessary regularity in order to be able to apply Theorem \ref{main theorem}. We remark that in all known geometric applications of this result the second equation \eqref{second} holds for $g\equiv 0$. 
\begin{remark}\label{dbaretc}
Interpreting $\Om$ as being connection forms for the trivial pull-back bundle $u^{\ast} (\T \R^n)$ we could re-write the PDE system \eqref{first} and \eqref{second} as 
\begin{eqnarray}
\dl_{\Om} (\ed u) &:=& \delta d u - \star (\Omega \wedge \star du)= f \label{firstc}\\
\ed_{\Om} (\ed u) &:=& d (du) + \Omega \wedge du= \ast g \label{secondc}. 
\end{eqnarray}
Or more succinctly with the standard complex structure on $\R^2$: 
$$-\db_{\Om} (\de u):=-\db (\de u) - \Om^{\zb}\wedge \de u = \fr14(f+ig).$$

The second condition $\ast(\Om\wedge \ed u) = -\Om\cdot \Db u = g$ is satisfied in all known geometric applications of this theorem for $g\equiv 0$. 
\end{remark}

The next result is our main supporting theorem, which we couple with the estimates for harmonic functions on cylinders (c.f. Proposition \ref{harm}) and the bubbling argument in the appendix in order to prove Theorem \ref{main theorem}.
\begin{theorem}\label{secder}
There exists an $\eps>0$ such that for all $\lambda,r>0$ satisfying $2r<1$, $\lambda<1$ and $\Om\in L^2(B_1\sm B_r, so(n)\otimes \bigwedge^1 \R^2)$, $f\in L\log L (B_1\sm B_r)$, $u\in W^{1,2}(B_1\sm B_r,\R^n)$ with $|\phi|^{1/2} \in L^{2,1}(B_1\sm B_r)$ satisfying 
\begin{align*}
-\Dl u =& \Om \cdot \D u + f\,\,\,\,\,\,\,\,\,\,\,\text{on $B_1\sm B_r$}
\end{align*}
and 
\begin{align*}
\sup_{r<\rho<\fr{1}{2}} \int_{B_{2\rho}\sm B_{\rho}} |\Om|^2 \leq& \eps,
\end{align*}
there exists some $C=C(\lambda,n)<\infty$ such that 
\begin{align*}
\left\| \nabla^2 u \right\|_{L^{1}(B_{\lambda }\sm B_{r/\lambda})} +\| \D u \|_{L^{2,1}(B_{\lambda }\sm B_{r/\lambda})}\leq& C\Big((1+\|\Om\|_{L^2(B_1\sm B_r)})(\|\D u\|_{L^2(B_1\sm B_r )}\\
&+\|f\|_{L\log L(B_1\sm B_r)})\Big)\\
&+C\||\phi|^{1/2}\|_{L^{2,1}(B_1\sm B_r)}.
\end{align*}
\end{theorem}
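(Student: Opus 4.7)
The plan is to pull back to the cylinder $P_T=[0,T]\times S^1$ via $(s,\ta)\mapsto e^{-s}(\cos\ta,\sin\ta)$ with $T=-\log r$. The equation $-\Dl u=\Om\cdot\D u+f$ is conformally covariant in two dimensions, so it transfers to $P_T$ with a new potential and inhomogeneity whose $L^2$ and $L\log L$ norms are controlled, and the dyadic smallness hypothesis becomes $\|\Om\|_{L^2([s,s+1]\times S^1)}^2\le\eps$ uniformly in $s$. The $L^{2,1}$-norm of $|\phi|^{1/2}$ (interpreted as the length of a section of the appropriate quadratic differential bundle), the $L^{2,1}$-norm of $\D u$, and the $L^1$-norm of $\D^2 u$ all have natural conformally invariant interpretations, so the theorem reduces to establishing the conclusion on $[c,T-c]\times S^1$ with constants independent of $T$.

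The core ingredient is an endpoint small-energy regularity: on each unit sub-cylinder the smallness of $\Om$ allows me to invoke Rivi\`ere's conservation law, and the Coifman-Lions-Meyer-Semmes compensation theorem then yields local bounds of the form
\[
\|\D u\|_{L^{2,1}([s+1,s+2]\times S^1)}+\|\D^2 u\|_{L^1([s+1,s+2]\times S^1)}\le C\bigl(\|\D u\|_{L^2([s,s+3]\times S^1)}+\|f\|_{L\log L([s,s+3]\times S^1)}\bigr).
\]
Naively summing in $s$ would lose a factor of $T$ in $L^{2,1}$, so a finer argument is needed.

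To recover scale invariance, I decompose $u(s,\ta)=\bar u(s)+v(s,\ta)$ into its $\ta$-average and mean-zero part, so $\pl{u}{\ta}=\pl{v}{\ta}$. The circle Poincar\'e inequality $\|v(s,\cdot)\|_{L^2(S^1)}\le\|\pl{v}{\ta}(s,\cdot)\|_{L^2(S^1)}$ combined with the local estimate above fuels a Laurain-Rivi\`ere-style angular bootstrap, producing a $T$-uniform bound
\[
\|\pl{u}{\ta}\|_{L^{2,1}([1,T-1]\times S^1)}\le C\bigl((1+\|\Om\|_{L^2})\|\D u\|_{L^2(P_T)}+\|f\|_{L\log L(P_T)}\bigr).
\]
The radial derivative is then controlled by the Hopf differential: in cylindrical coordinates the push-forward $\tilde\phi=|\pl{u}{s}|^2-|\pl{u}{\ta}|^2-2i\la\pl{u}{s},\pl{u}{\ta}\ra$ of $\phi$ gives $|\pl{u}{s}|\le|\pl{u}{\ta}|+|\tilde\phi|^{1/2}$ pointwise, and the $L^{2,1}$-bound for $\D u$ follows. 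Feeding this back into the PDE, Hunt duality $\|\Om\cdot\D u\|_{L^1}\le\|\Om\|_{L^{2,\infty}}\|\D u\|_{L^{2,1}}$ together with Stein's $L\log L\tu W^{2,1}$ endpoint Calder\'on-Zygmund estimate produce the $L^1$-bound for $\D^2 u$, which transfers back to the annulus.

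The principal obstacle will be establishing the scale-invariant angular $L^{2,1}$-bound. Rivi\`ere's conservation law delivers $L^{2,1}$-bounds locally on each sub-cylinder, but summing them over a long cylinder without logarithmic loss requires precisely exploiting the Poincar\'e improvement for the mean-zero part $v$ together with the smallness of $\D u$ in $L^{2,\infty}$ on each sub-cylinder (itself a consequence of the dyadic smallness of $\Om$). This sharp $L^{2,1}$-summability for the angular mode, coupled with the Hopf-differential mechanism that closes up the radial mode, is the technical heart of the argument.
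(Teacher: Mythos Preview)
Your strategy for the $L^{2,1}$-estimate on $\D u$ is essentially the paper's: the angular bound is exactly Laurain--Rivi\`ere's Theorem (Theorem~\ref{lau-riv} here), and closing the radial part via $|u_\rho|\le |\phi|^{1/2}+\rho^{-1}|u_\ta|$ is Proposition~\ref{hopfest}. That half is fine.

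The gap is in your passage to $\D^2 u\in L^1$. Hunt duality indeed gives $\|\Om\cdot\D u\|_{L^1}\le \|\Om\|_{L^{2,\infty}}\|\D u\|_{L^{2,1}}$, but $L^1$ on the right-hand side is \emph{not} enough for Calder\'on--Zygmund: the implication $\Dl u\in L^1\Rightarrow \D^2 u\in L^1$ is precisely the endpoint that fails, and Stein's $L\log L\to W^{2,1}$ theorem needs the stronger input $L\log L$ (or $\h^1$), which your duality step does not deliver. Nor can you recover by summing the local sub-cylinder estimates $\|\D^2 u\|_{L^1([s+1,s+2])}\le C\|\D u\|_{L^2([s,s+3])}$: the sum $\sum_s\|\D u\|_{L^2([s,s+3])}$ loses a factor $T^{1/2}$ by Cauchy--Schwarz, exactly the logarithmic loss you already flagged for $L^{2,1}$.

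The paper's route around this is substantially more structural. Under smallness of $\|\Om\|_{L^2}$ on the full annulus one invokes Rivi\`ere's gauge $A\in W^{1,2}\cap L^\infty$ with $\D A-A\Om=\D^\perp B$, extends $u$ across the hole, and Hodge-decomposes $A\D\tilde u=\D C+\D^\perp D$. Then $\Dl D=\D A\cdot\D^\perp\tilde u$ and (on the annulus) $\Dl C=\D^\perp B\cdot\D\tilde u-Af$ both have div--curl or $L\log L$ right-hand sides, so CLMS and the annular Wente estimate (Lemma~\ref{wente}) give genuine $W^{2,1}$ control on the particular solutions. The residual harmonic piece on the annulus is handled by the explicit Fourier-series estimate of Proposition~\ref{harm}, and it is precisely here---in the bound $\|\D^2 v\|_{L^1}\le C(\|v_r\|_{L^{2,1}}+\|v_r\|_{L^2})$ for harmonic $v$---that the already-established $L^{2,1}$-control of $\D u$ is fed back in. The general case (only dyadic smallness) is then reduced to this one by a covering argument. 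Your proposal skips all of this structure, and without it the $\D^2 u$ bound does not close.
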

A similar result has been obtained by Laurain and Rivi\`ere \cite{LR} without an assumption on $\phi$. But on the other hand, they only conclude an estimate for the $L^{2,1}$-norm of the angular part of the first derivative of $u$. Moreover a counter-example from \cite{LR} serves to show that our estimate is false without the condition on $\phi$. 

The argument required to prove Theorem \ref{main theorem} will be by contradiction coupled with Theorem \ref{secder}; we perform a bubbling argument and show that the only way a uniform bound as in Theorem \ref{main theorem} can fail to hold is if the norm blows up on so called ``neck domains" which are precisely of the form of Theorem \ref{secder}. 

Finally, we also mention our main new energy identity and no-neck property result.

\begin{theorem}\label{EIL21}
Let $u_k\in W^{1,2}(B_1,\R^n)$ be a sequence of solutions of 
\begin{align*}
-\Dl u_k =& \Om_k \cdot \D u_k + f_k, 
\end{align*}
where $\Om_k \in L^2(B_1\sm B_r, so(n)\otimes \bigwedge^1 \R^2)$, $f_k\in L \log L(B_1,\R^n)$, and we assume that there exists a constant $\Lambda>0$ so that for every $k\in \N$
\[
\int_{B_1} \left( |\D u_k|^2+|\Om_k|^2 \right)dx+\| f_k\|_{L\log L(B_1)}+ \| |\phi_k|^{1/2}\|_{L^{2,1}(B_1)} \le \Lambda.
\]
Then there exists a subsequence, still denoted by $u_k$, $\Om_k$ and $f_k$, so that $u_k \rightharpoonup u $ weakly in $W^{1,2}(B_1)$, $\Omega_k \rightharpoonup \Om$  weakly in $L^2(B_1)$ and $f_k\rightharpoonup f \in L\log L(B_1)$ in a distributional sense and the limits are solutions of 
\begin{align*}
-\Dl u =& \Om \cdot \D u + f.
\end{align*}
Moreover there exist at most finitely many $\omega$-bubbles $\omega^{i,j}:\R^2\to \R^n$, $1\leq i \leq p$, $1\leq j\leq j_i$, i.e. solutions of
\[
-\Delta \omega^{i,j} = \Om^{i,j} \cdot \D \omega^{i,j},
\]
sequences of points $x_k^{i,j} \in B_1$, $x_k^{i,j} \to x_i$, and sequences of radii $t_k^{i,j}\in \R_+$, $t_k^{i,j} \to 0$, such that for every $r<1$ so that $\{ x_1,\ldots,x_p\}\in  B_r$ 
\begin{align}
 \max \{  \frac{t_k^{i,j}}{t_k^{i,j'}},\frac{t_k^{i,j'}}{t_k^{i,j}},\frac{\text{dist}(x_k^{i,j},x_k^{i,j'})}{t_k^{i,j}+t_k^{i,j'}} \} &\to \infty, \ \ \ \forall \ \ 1\leq i \leq p, \ \ 1\leq j,j'\leq j_i,\ \ j\not= j', \label{conv1}\\  
\lim_{k\to \infty} \|\nabla u_{k}\|_{L^{2}(B_r,\R^n)}^2&=\| \nabla u\|^2_{L^{2}(B_r,\R^n)}+\sum_{i=1}^p\sum_{j=1}^{j_i} \|\nabla \omega^{i,j}\|_{L^{2}(\R^2,\R^n)}^2. \label{energyequality9}
\end{align}
If we assume additionally that $f_k\in L^p(B_1,\R^n)$ for some $1<p\le \infty$ with 
\[
\| f_k\|_{L^p(B_1)} \le \Lambda
\]
and
\[
\| |\phi_k|^{1/2}\|_{L^{2,1}(Z)}  \to 0
\]
for every subset $Z\subset B_1$ with $|Z| \to 0$, then we also have for every $r<1$ as above
\begin{align}
\lim_{k\to \infty} \|\nabla u_{k}\|_{L^{2,1}(B_r,\R^n)}^2&=\| \nabla u\|^2_{L^{2,1}(B_r,\R^n)}+\sum_{i=1}^p\sum_{j=1}^{j_i} \|\nabla \omega^{i,j}\|_{L^{2,1}(\R^2,\R^n)}^2.  \label{energyequality10}
\end{align}
Furthermore, the map $u$ and the maps $\omega^{i,j}$ are connected without necks and $\om^{i,j}$ are all conformal.  
\end{theorem}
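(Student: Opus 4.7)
The plan is to combine the bubbling decomposition from the appendix with the main supporting Theorem \ref{secder} applied on the degenerating neck annuli, exploiting the Lorentz duality $(L^{2,1})^\ast=L^{2,\infty}$ to extract decay of the neck energy. First I extract a subsequence so that $u_k\rightharpoonup u$ in $W^{1,2}(B_1)$, $\Om_k\rightharpoonup \Om$ in $L^2(B_1)$ and $f_k\to f$ distributionally; that the limit triple solves $-\Dl u=\Om\cdot\D u+f$ follows by localising away from the (finite) concentration set, where Rivi\`ere's $\e$-regularity promotes weak convergence of $u_k$ to strong $W^{1,2}_\loc$ convergence, allowing the nonlinear term to pass to the limit. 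I then invoke the bubbling construction from the appendix to extract the concentration points $x_i$, bubble centres $x_k^{i,j}$, scales $t_k^{i,j}\to 0$ and bubbles $\om^{i,j}:\R^2\to \R^n$ satisfying \eqref{conv1}. Each $\om^{i,j}$ is the $W^{1,2}_\loc$-weak limit of $u_k(x_k^{i,j}+t_k^{i,j}\,\cdot)$ and satisfies $-\Dl\om^{i,j}=\Om^{i,j}\cdot\D\om^{i,j}$ on $\R^2$. This decomposes the ball (up to a set of arbitrarily small measure) into bubble discs, the body, and the remaining neck annuli $A_k^{i,j}$.

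To establish the $L^2$-identity \eqref{energyequality9} I show $\|\D u_k\|_{L^2(A_k^{i,j})}\to 0$. For $k$ large the smallness assumption $\sup_\rho\int_{B_{2\rho}\sm B_\rho}|\Om_k|^2\le\e$ is satisfied on each $A_k^{i,j}$ by construction of the bubble scales, so Theorem \ref{secder} applies and yields a uniform bound on $\|\D u_k\|_{L^{2,1}(A_k^{i,j})}$. Combining this with the Lorentz duality estimate
\[
\|\D u_k\|_{L^2(A_k^{i,j})}^2\;\le\; \|\D u_k\|_{L^{2,1}(A_k^{i,j})}\,\|\D u_k\|_{L^{2,\infty}(A_k^{i,j})}
\]
reduces matters to $\|\D u_k\|_{L^{2,\infty}(A_k^{i,j})}\to 0$, which is a standard dyadic consequence of the small-energy $\e$-regularity estimate on annuli, using that the ratio of the radii of $A_k^{i,j}$ diverges as $k\to\infty$.

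For the $L^{2,1}$-identity \eqref{energyequality10} under the additional hypotheses I upgrade the neck estimate to $\|\D u_k\|_{L^{2,1}(A_k^{i,j})}\to 0$ itself. The absolute continuity of $\||\phi_k|^{1/2}\|_{L^{2,1}(\cdot)}$ kills the Hopf term on the right of Theorem \ref{secder} when restricted to sub-annuli of small measure, while the $L^p$ upgrade ($p>1$) of $f_k$ promotes $\|f_k\|_{L\log L}$ to a quantity that is absolutely continuous in the same sense. A dyadic partition of $A_k^{i,j}$ then produces tail estimates whose total contribution vanishes in the limit. The no-neck property follows from the continuous embedding $W^{2,1}(\R^2)\emb C^0$: the resulting uniform $W^{2,1}$ bound together with the vanishing $L^{2,1}$-norm of $\D u_k$ on necks forces the oscillation of $u_k$ across each $A_k^{i,j}$ to tend to zero, establishing a continuous connection between $u$ and every $\om^{i,j}$. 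Conformality of the bubbles is then obtained from the scale invariance of $\||\phi_k|^{1/2}\|_{L^{2,1}}$ in two dimensions: a direct change of variables gives $\||\phi_k^{\text{resc}}|^{1/2}\|_{L^{2,1}(B_R)} = \||\phi_k|^{1/2}\|_{L^{2,1}(B_{R\,t_k^{i,j}}(x_k^{i,j}))} \to 0$ by the absolute continuity hypothesis, so each $\om^{i,j}$ has identically vanishing Hopf differential.

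I expect the principal obstacle to be the decay step of paragraph three: upgrading the uniform $L^{2,1}$ estimate of Theorem \ref{secder} to genuine decay on the necks. Since $L^{2,1}$ is strictly stronger than $L^2$, the elementary duality trick that closes the $L^2$ identity does not apply, and one must arrange a dyadic telescope on $A_k^{i,j}$ where all three absolute-continuity ingredients --- of $|\phi_k|^{1/2}$ in $L^{2,1}$, of $f_k$ via the $L^p$ upgrade of $L\log L$, and of the smallness of $\Om_k$ on each sub-annulus --- are orchestrated simultaneously to yield a summable and vanishing bound.
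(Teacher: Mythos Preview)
Your proposal is correct and follows the same overall strategy as the paper: bubbling decomposition from the appendix, Theorem~\ref{secder} on necks, and Lorentz duality for the $L^2$ identity. The one place you diverge is the $L^{2,1}$ decay on necks, which you flag as the ``principal obstacle'' and propose to handle by a dyadic telescope. This is unnecessary, and in fact the paper's argument is much simpler than what you envision.

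The key observation you are missing is that the $L^2$ energy identity, once established, feeds back into Theorem~\ref{secder}. After \eqref{energyequality9} you already know $\|\D u_k\|_{L^2(A_k^{i,j})}\to 0$. The right-hand side of Theorem~\ref{secder} is
\[
C\bigl(1+\|\Om_k\|_{L^2}\bigr)\bigl(\|\D u_k\|_{L^2(A_k^{i,j})} + \|f_k\|_{L\log L(A_k^{i,j})}\bigr) + C\||\phi_k|^{1/2}\|_{L^{2,1}(A_k^{i,j})},
\]
and now \emph{all three} inputs vanish: the first by the $L^2$ identity just proved, the second because $f_k\in L^p$ with $p>1$ makes the $L\log L$ norm absolutely continuous on sets of small measure, and the third by the hypothesis on $|\phi_k|^{1/2}$. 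A \emph{single} further application of Theorem~\ref{secder} on the whole neck therefore gives $\|\D u_k\|_{L^{2,1}(A_k^{i,j})}\to 0$ directly --- no dyadic summation is needed. Your proposed telescope would moreover run into the complication that $L^{2,1}$ norms do not decompose additively over disjoint pieces.

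A minor point on the no-neck property: the embedding $W^{2,1}\hookrightarrow C^0$ with a uniform bound does not by itself yield vanishing oscillation. The paper instead extends $u_k$ minus its average on the neck to a compactly supported map (via Marcinkiewicz interpolation for the extension operator) and applies the sharp embedding $\|v\|_{L^\infty}\le C\|\D v\|_{L^{2,1}}$; the vanishing of $\|\D u_k\|_{L^{2,1}(A_k^{i,j})}$ then forces the oscillation to zero. Your conformality argument via scale-invariance of $\||\phi|^{1/2}\|_{L^{2,1}}$ is correct and matches the paper.
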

\begin{remark}
Once again an easy corollary of this theorem is in the setting where $f_k \in L^2(B_1,\R^n)$ and additionally $\Om_k \cdot \Db u_k = g_k \in L^2(B_1,\R^n)$ (with uniformly bounded norms) under which all the additional assumptions on the Hopf differential are true. In particular, the result applies to sequences of critical points of conformally invariant variational problems with quadratic growth in the gradient.
\end{remark}

It is known that for general solutions to \eqref{omega} we cannot expect better than $W^{2,p}_{loc}$ regularity for $p<2$ - see \cite{Sh}, however we make the following
\begin{conj}
If $u$ is a solution to \eqref{omega} such that $\phi = 0$ almost everywhere, then $u\in W^{2,2}\cap W^{1,\infty}$ - in particular $u$ could be said to have weak mean curvature in $L^2$. 

We also remark that an interesting question here is whether or not the zeros of $\D u$ are finite and isolated under these conditions. 
\end{conj}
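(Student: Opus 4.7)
The plan is to bootstrap regularity in stages, using Theorem \ref{main theorem} as a starting point and then exploiting the conformal structure imposed by $\phi\equiv 0$.

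First I would observe that $\phi\equiv 0$ trivially gives $|\phi|^{1/2}\equiv 0\in L^{2,1}_{\loc}(B_1)$, so Theorem \ref{main theorem} applies with $f\equiv 0$ and yields $u\in W^{2,1}_{\loc}(B_1,\R^n)$ together with $\D u\in L^{2,1}_{\loc}(B_1)$. Combined with $\Om\in L^2_{\loc}$ and Lorentz--H\"older duality, this already gives $\Dl u=-\Om\cdot\D u\in L^{p}_{\loc}$ for some $p>1$ after Sobolev embedding in two dimensions, which is strictly better than the starting hypothesis but still far from $W^{2,2}\cap W^{1,\infty}$.

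The second stage uses the conformality. Writing $e^{2\lambda}:=\tfrac12|\D u|^2$, the condition $\phi\equiv 0$ means $u_x\perp u_y$ and $|u_x|=|u_y|=e^\lambda$ a.e., so on the complement of the degenerate set $\{e^\lambda=0\}$ the map $u$ is a weakly conformal parametrisation of an immersed surface. A general identity for conformal maps forces the tangential component of $\Dl u$ to vanish and gives $\Dl u=2e^{2\lambda}H$ where $H$ is the mean curvature vector of the image. I would combine this with the Gauss equation to derive a Liouville-type equation $-\Dl\lambda=Ke^{2\lambda}$ with $K$ controlled pointwise by $|H|^2$ and the tracefree second fundamental form, and then appeal to the M\"uller--\v Sver\'ak / Kuwert--Li theory of $W^{2,2}$-conformal immersions (\cite{Kuwert_Li}) to conclude $\lambda\in L^\infty_{\loc}$. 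This gives $\D u\in L^\infty_{\loc}$, and then the equation $\Dl u=-\Om\cdot\D u$ with $\Om\in L^2$ immediately yields $\Dl u\in L^2_{\loc}$, hence the claim.

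The main obstacle is the evident circularity in the argument above: M\"uller--\v Sver\'ak regularity for conformal immersions requires an $L^2$ bound on $H$ with respect to the induced area measure, i.e.\ essentially $\Dl u\in L^2_{\loc}$, which is precisely the conclusion we are trying to extract. Breaking this loop is the heart of the problem; I would attempt it either by proving a sharp $\varepsilon$-regularity statement at the $W^{2,1}$ threshold (upgrading the Lorentz gain from Theorem \ref{main theorem} directly to an $L^p$ bound with $p>2$ on $\D u$ and iterating), or by combining Rivi\`ere's conservation law for \eqref{omega} with the vanishing of $\phi$ to extract additional cancellation in the normal part of $\Om\cdot\D u$ via an adapted Coulomb gauge on the image surface. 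This is presumably the reason the statement is recorded as a conjecture, and the final remark about isolated zeros of $\D u$ fits naturally into the same M\"uller--\v Sver\'ak framework once a Lipschitz bound is in hand.
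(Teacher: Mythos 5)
The statement you are asked about is recorded in the paper as a \emph{conjecture}: the authors give no proof of it, and your own proposal does not supply one either, as you candidly acknowledge. The circularity you identify in your second stage is precisely the missing idea and not a technical nuisance that standard tools are likely to dissolve: the M\"uller--\v{S}ver\'ak/Kuwert--Li theory invoked to get $\lambda\in L^\infty_{\loc}$ (hence $\D u\in L^\infty_{\loc}$) presupposes an $L^2$ bound on the mean curvature with respect to the induced measure, i.e.\ essentially $\Dl u\in L^2_{\loc}$, which is the conclusion sought; moreover, for a merely weakly conformal $W^{1,2}$ map the degenerate set $\{\D u=0\}$ need not be small a priori, so even writing $\Dl u=2e^{2\lambda}H$ and a Liouville equation for $\lambda$ is not justified at the regularity level you start from. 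The paper itself flags the analogous difficulty in Section 6 (whether a weak conformal map with mean curvature in $L^2$ has $\D F\in L^\infty$ with isolated zeros is posed there as an open question), so the loop you describe is exactly where the problem currently stands.

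There is also a gap in your first stage as written: from Theorem \ref{main theorem} you obtain $\D u\in L^{2,1}_{\loc}$ and $u\in W^{2,1}_{\loc}$, but $\Om$ is only assumed to be in $L^2$, so the product $\Om\cdot\D u$ lies in (a Lorentz refinement of) $L^1_{\loc}$ and \emph{not} in $L^p_{\loc}$ for any $p>1$; the claimed Sobolev-embedding upgrade does not go through, and indeed the paper points out (citing \cite{Sh}) that for general solutions of \eqref{omega} one cannot expect better than $W^{2,p}_{\loc}$ for $p<2$. Any genuine progress must therefore extract quantitative gain specifically from $\phi\equiv 0$ (your suggested $\eps$-regularity at the $W^{2,1}$ threshold, or extra cancellation via a gauge adapted to the image surface), and neither route is carried out here. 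In short: no proof exists in the paper to compare with, and your attempt, while correctly locating the obstruction, leaves the conjecture open.
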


\section{Supporting results}
In this section we collect all results which are needed in order to prove the main Theorems mentioned before.

\subsection{Estimates involving the Hopf differential}
In the following we do some computations in polar coordinates $(\rho,\theta)$: 
Obviously for a map $u\in W^{1,2}(B_1)$ we have
\[
u_x = u_{\rho}\fr{x}{\p} - u_{\ta}\fr{y}{\p^2}, \ \ \  u_y = u_{\p}\fr{y}{\p} + u_{\ta}\fr{x}{\p^2} \ \ \ \text{and} \ \ \ |\D u|^2 = |u_x|^2 + |u_y|^2 
= |u_{\p}|^2 +\fr{|u_{\ta}|^2}{\p^2} .
\]
Recall that for $a,b \geq 0$ we have
\[
(a^{\fr12} + b^{\fr12})(a+b)^{\fr12} = (a^2 + ab)^{\fr12} + (b^2 + ab)^{\fr12} \geq a +b 
\]
and hence $(a+b)^{\fr12} \leq a^{\fr12} + b^{\fr12}$ so 
we have 
\begin{equation}\label{2}
|u_\p| = \left(  |u_{\p}|^2 -\fr{|u_{\ta}|^2}{\p^2}  + \fr{|u_{\ta}|^2}{\p^2}\right)^{\fr12} \leq \left( \left| |u_{\p}|^2 -\fr{|u_{\ta}|^2}{\p^2} \right|\right)^{\fr12} + \fr{|u_{\ta}|}{\p}.
\end{equation}
\begin{prop}\label{hopfest}
Let $u\in W^{1,2}(B_1)$ and let $\phi$ be the Hopf differential of $u$, then
\begin{equation*}
|u_\p| \leq |\phi|^{\fr12} + \fr{|u_{\ta}|}{\p}.
\end{equation*}
\end{prop}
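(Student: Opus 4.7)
The plan is to reduce the claim to the inequality (2) stated just before the proposition via an explicit polar-coordinate expression for $|\phi|$. First I would substitute the polar formulas for $u_x$ and $u_y$ recorded in the excerpt into the definition of $\phi$. Introducing the shorthand
\[
A := |u_\rho|^2 - \frac{|u_\theta|^2}{\rho^2}, \qquad B := \frac{2\langle u_\rho, u_\theta\rangle}{\rho},
\]
a short calculation using $x = \rho\cos\theta$, $y = \rho\sin\theta$ yields
\begin{align*}
|u_x|^2 - |u_y|^2 &= A\cos(2\theta) - B\sin(2\theta),\\
2\langle u_x, u_y\rangle &= A\sin(2\theta) + B\cos(2\theta).
\end{align*}

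Since $|\phi|^2 = (|u_x|^2-|u_y|^2)^2 + (2\langle u_x,u_y\rangle)^2$ directly from the definition of the Hopf differential, these two identities collapse, after expanding and using $\cos^2(2\theta)+\sin^2(2\theta) = 1$, to the clean formula $|\phi|^2 = A^2+B^2$. The decisive observation is then the trivial bound $|\phi| \geq |A|$; taking square roots gives
\[
|\phi|^{1/2} \;\geq\; \bigl| |u_\rho|^2 - \tfrac{|u_\theta|^2}{\rho^2}\bigr|^{1/2}.
\]

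Plugging this directly into the pointwise inequality (2), namely
\[
|u_\rho| \;\leq\; \bigl| |u_\rho|^2 - \tfrac{|u_\theta|^2}{\rho^2}\bigr|^{1/2} + \frac{|u_\theta|}{\rho},
\]
concludes the proof. There is no real conceptual obstacle here; the only risk is an algebraic slip in the polar expansion of $\phi$. As a sanity check one can appeal to the conformal behaviour of the Hopf differential: in the orthonormal polar frame $(\partial_\rho, \rho^{-1}\partial_\theta)$, the quadratic form $\phi$ reads as $A - iB$ up to a unit phase factor arising from $dz^2 = e^{2i\theta}(d\rho + i\rho\,d\theta)^2$, which gives $|\phi|^2 = A^2 + B^2$ with essentially no computation at all.
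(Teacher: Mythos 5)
Your proof is correct and is essentially the paper's own argument: both compute $|\phi|^2$ in polar coordinates, obtain $|\phi|^2=\left(|u_\rho|^2-\frac{|u_\theta|^2}{\rho^2}\right)^2+4\left\langle u_\rho,\frac{u_\theta}{\rho}\right\rangle^2$, drop the second term, and feed the resulting bound into the pointwise inequality \eqref{2}. Your use of the $2\theta$-rotation identities is just a tidier packaging of the same expansion the paper carries out term by term.
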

\begin{proof}
By formula \eqref{2} we see that it suffices to show that
$$\left||u_{\p}|^2 -\fr{|u_{\ta}|^2}{\p^2}\right| \leq |\phi|.$$
From the above formulas for the partial derivatives of $u$ we have 
$$2\la u_x, u_y\ra = 2\fr{xy}{\p^2}\left( |u_{\p}|^2 -\fr{|u_{\ta}|^2}{\p^2} \right) + 2\fr{x^2-y^2}{\p^2}\left\la u_{\p}, \fr{u_{\ta}}{\p}\right\ra$$ and thus
\begin{eqnarray}\label{3}
(2\la u_x, u_y\ra)^2 &=& 4\fr{x^2y^2}{\rho^4}\left( |u_{\p}|^2 -\fr{|u_{\ta}|^2}{\p^2} \right)^2 + 4\fr{(x^2-y^2)^2}{\p^4}\left\la u_{\p}, \fr{u_{\ta}}{\p}\right\ra^2 +\nn\\&&+ 8\fr{xy(x^2-y^2)}{\p^4}\left( |u_{\p}|^2 -\fr{|u_{\ta}|^2}{\p^2} \right)\left\la u_{\p}, \fr{u_{\ta}}{\p}\right\ra.
\end{eqnarray}

Moreover 
$$|u_x|^2 - |u_y|^2 = \fr{x^2-y^2}{\p^2}\left( |u_{\p}|^2 -\fr{|u_{\ta}|^2}{\p^2} \right)-4\fr{xy}{\p^2}\left\la u_{\p}, \fr{u_{\ta}}{\p}\right\ra$$
giving 
\begin{eqnarray}\label{4}
(|u_x|^2 - |u_y|^2)^2 &=& \fr{(x^2 -y^2)^2}{\p^4}\left( |u_{\p}|^2 -\fr{|u_{\ta}|^2}{\p^2} \right)^2 + 16\fr{x^2y^2}{\p^4}\left\la u_{\p}, \fr{u_{\ta}}{\p}\right\ra^2 +\nn\\
&& - 8\fr{xy(x^2-y^2)}{\p^4}\left( |u_{\p}|^2 -\fr{|u_{\ta}|^2}{\p^2} \right)\left\la u_{\p}, \fr{u_{\ta}}{\p}\right\ra.
\end{eqnarray}
Putting together \eqref{3} and \eqref{4} gives
\begin{eqnarray}
|\phi|^2 &=& (|u_x|^2 - |u_y|^2)^2 + (2\la u_x, u_y\ra)^2 \nn\\
&=& \fr{(x^2-y^2)^2 + 4x^2y^2}{\p^4}\left( |u_{\p}|^2 -\fr{|u_{\ta}|^2}{\p^2} \right)^2 + \fr{4(x^2 - y^2)^2 + 16x^2y^2}{\p^4}\left\la u_{\p}, \fr{u_{\ta}}{\p}\right\ra^2\nn \\
&=&  \left( |u_{\p}|^2 -\fr{|u_{\ta}|^2}{\p^2} \right)^2 + 4\left\la u_{\p}, \fr{u_{\ta}}{\p}\right\ra^2 \nn \\
&\geq & \left( |u_{\p}|^2 -\fr{|u_{\ta}|^2}{\p^2} \right)^2
\end{eqnarray}
and this finishes the proof.
\end{proof}

\subsection{Harmonic functions on conformally long cylinders}
Next we derive some estimates for harmonic functions on conformally long cylinders. The importance of these estimates for deriving results similar to Theorem \ref{secder} was made clear by Laurain and Rivi\`ere \cite{LR}.
\begin{prop}\label{harm}
Let $h$ be a harmonic function on $B_1\sm B_{\eps}$ for some $\eps< \fr14$. Then for any $0< \lambda <\fr12 $, there exists $C=C(\lambda)$ such that  
\begin{align}
\left\| \frac{d}{dr}(rh_r)\right\|_{L^{2,1}(B_{\lambda}\sm B_{\eps \lambda^{-1}})} + \left\|\fr{h_{\theta}}{r}\right\|_{L^{2,1}(B_{\lambda}\sm B_{\eps \lambda^{-1}})} +& \left\|\fr{h_{\theta\theta}}{r}\right\|_{L^{2,1}(B_{\lambda}\sm B_{\eps \lambda^{-1}})} \nonumber \\
&+ \|h_{r\theta}\|_{L^{2,1}(B_{\lambda}\sm B_{\eps \lambda^{-1}})} \nonumber \\
\leq& C\| \nabla h\|_{L^2(B_1\sm B_{\eps})}.\label{estharmonic}
\end{align}
Moreover, we have the estimate
\begin{align}
\|\D^2 h\|_{L^1(B_{\lambda}\sm B_{\eps \lambda^{-1}})} \leq C (\| h_r\|_{L^{2,1}(B_{\lambda}\sm B_{\eps\lambda^{-1}})}+\| \nabla h\|_{L^{2}(B_1\sm B_{\eps})}).\label{estharmonic2}
\end{align}
\end{prop}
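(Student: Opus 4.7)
The strategy is to pass to log-polar coordinates $t=-\log r$, so that $B_1\sm B_\eps$ becomes the cylinder $[0,T]\times S^1$ with $T=\log(1/\eps)>\log 4$; there $\Dl h=0$ reads $h_{tt}+h_{\ta\ta}=0$, and $r\partial_r=-\partial_t$ yields
\[
\|h_r\|_{L^2(B_1\sm B_\eps)}^2 \;=\; \int_0^T\!\!\int_0^{2\pi}|h_t|^2\,dt\,d\ta.
\]
I would decompose $h=h_0+h^\perp$ into its angular mean and complement, and Fourier expand $h^\perp(t,\ta)=\sum_{n\neq 0}\hat h_n(t)e^{in\ta}$. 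Harmonicity forces $h_0(r)=a+b\log r$ and $\hat h_n(t)=a_n e^{-|n|t}+b_n e^{|n|t}$, i.e.\ $h_n(r)=a_n r^{|n|}+b_n r^{-|n|}$. The zero mode is handled immediately: $rh_r\equiv -b$ so $\tfrac{d}{dr}(rh_r)\equiv 0$, and all angular derivatives of $h_0$ vanish, so $h_0$ contributes nothing to \eqref{estharmonic}.

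For the non-zero modes I set $\alpha_n=|a_n|$ and $\beta_n=|b_n|\eps^{-|n|}$ (the two boundary sizes). Expanding the square in $E_n:=\int_0^T|\hat h_n'|^2\,dt$ and using $2|\mathrm{Re}(\overline{a_n}b_n)|\leq e^{-|n|T}(\alpha_n^2+\beta_n^2)$ produces
\[
E_n\;\geq\;\tfrac{|n|}{2}(\alpha_n^2+\beta_n^2)\bigl[(1-e^{-2|n|T})-2|n|Te^{-|n|T}\bigr]\;=\;\tfrac{|n|}{2}(\alpha_n^2+\beta_n^2)\,g(|n|T),
\]
where $g(x):=1-e^{-2x}-2xe^{-x}$ is strictly increasing on $[0,\infty)$ with $g(x)\sim x^3/3$ near $0$. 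Since $|n|T\geq T>\log 4$, this gives the key lower bound $\alpha_n^2+\beta_n^2\lesssim E_n/|n|$ uniformly in $n\neq 0$ and $\eps<1/4$; Plancherel then identifies $\|h_r\|_{L^2}^2=2\pi\sum_{n\neq 0}E_n$.

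Each quantity appearing in \eqref{estharmonic} decomposes into Fourier modes of the form $|n|^k(a_n r^{|n|-1}+b_n r^{-|n|-1})e^{in\ta}$ with $k\in\{1,2\}$. A direct level-set computation on the middle annulus $B_\lambda\sm B_{\eps/\lambda}$ produces
\[
\|a_n r^{|n|-1}\|_{L^{2,1}}\lesssim\alpha_n\lambda^{|n|},\qquad\|b_n r^{-|n|-1}\|_{L^{2,1}}\lesssim\beta_n\lambda^{|n|};
\]
the second bound is the delicate one, as the $L^\infty$-norm of $r^{-|n|-1}$ blows up like $\eps^{-|n|-1}$ but the superlevel sets shrink just fast enough that the inner radius $\eps/\lambda$ exactly absorbs the $\eps^{-|n|}$ hidden in $\beta_n$, leaving the geometric factor $\lambda^{|n|}$. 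Summing over modes via Cauchy--Schwarz,
\[
\sum_{n\neq 0}|n|^k(\alpha_n+\beta_n)\lambda^{|n|}\leq\Bigl(\sum_{n\neq 0}E_n\Bigr)^{1/2}\Bigl(\sum_{n\neq 0}|n|^{2k}\lambda^{2|n|}\Bigr)^{1/2}\leq C(\lambda)\|h_r\|_{L^2(B_1\sm B_\eps)},
\]
since the second series converges geometrically for $\lambda<1$. This proves \eqref{estharmonic}.

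Finally \eqref{estharmonic2} will follow from the pointwise identity $rh_{rr}=\tfrac{d}{dr}(rh_r)-h_r$ and the Cartesian--polar comparison $|\D^2 h|\lesssim|h_{rr}|+|h_r|/r+|h_{r\ta}|/r+|h_{\ta\ta}|/r^2$: each resulting integral is of the form $\int|F|\,dr\,d\ta=\int|F|\cdot\tfrac1r\cdot r\,dr\,d\ta$, and \eqref{duality} together with the universal estimate $\|1/r\|_{L^{2,\infty}(B_\lambda\sm B_{\eps/\lambda})}\leq\sqrt\pi$ controls this by $\|F\|_{L^{2,1}}$, already handled in \eqref{estharmonic} or explicitly appearing (for $F=h_r$) on the right of \eqref{estharmonic2}. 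The main technical obstacle I anticipate is verifying the uniform positivity of $g(|n|T)$ and, simultaneously, extracting the geometric $\lambda^{|n|}$ decay in the $L^{2,1}$ estimate per mode despite the pointwise divergence of $r^{-|n|-1}$ as $\eps\to 0$ -- this is the step where the conformal length of the annulus really matters.
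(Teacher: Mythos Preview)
Your proposal is correct and follows essentially the same route as the paper: Fourier expansion of the harmonic function on the annulus, per-mode $L^{2,1}$ estimates for $r^{\pm|n|-1}$ on the shrunken annulus yielding the geometric decay $\lambda^{|n|}$, Cauchy--Schwarz against $\|h_r\|_{L^2}$ to sum, and then the $L^{2,1}$--$L^{2,\infty}$ duality with $1/r$ for \eqref{estharmonic2}. The only cosmetic differences are your use of log-polar coordinates and the auxiliary function $g$ to absorb the cross terms (the paper instead cites \cite{LR} for the mode estimates and does a slightly different bookkeeping in the Cauchy--Schwarz step); note also that your displayed Cauchy--Schwarz should read $|n|^{2k-1}$ rather than $|n|^{2k}$ in the second factor, though of course both series converge for $\lambda<1$.
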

\begin{proof}
We write the harmonic function $h$ as 
$$h(r,\theta) = c_0 + d_0 \log r + \sum_{n\in \mathbb{Z}\sm\{0\}} (c_nr^n + d_n r^{-n})e^{in\theta}.$$
Thus we have 
$$\left|\fr{h_{\theta}(r,\theta)}{r} \right| \leq \sum_{n\in \mathbb{Z}\sm\{0\}}\left( |nc_n|r^{n-1} + |nd_n|r^{-n-1}\right)$$
and also 
$$\left|\fr{h_{\theta\theta}(r,\theta)}{r}\right| + |h_{r\theta}(r,\theta)| \leq 2\sum_{n\in \mathbb{Z}\sm\{0\}}\left( |n^2 c_n| r^{n-1} + |n^2 d_n| r^{-n-1}\right).$$
Setting 
$$H:= \sum_{n\in \mathbb{Z}\sm\{0\}} \left(|n^2 c_n| r^{n-1} + |n^2 d_n| r^{-n-1}\right)$$
we observe that the estimate 
\[
\|H\|_{L^{2,1}(B_{\lambda}\sm B_{\eps \lambda^{-1}})} \leq C\|\nabla h\|_{L^2(B_1\sm B_{\eps})}
\] 
would show \eqref{estharmonic} for the last three terms on the left hand side. 

We can estimate each term of $H$ in $L^{2,1}$ and we have, as in the appendix of \cite{LR}, 
$$\|r^{n-1}\|_{L^{2,1}(B_{\lambda}\sm B_{\eps \lambda^{-1}})}\leq\sqrt{\pi}\lambda^n$$ and 
$$\|r^{-n-1}\|_{L^{2,1}(B_{\lambda}\sm B_{\eps \lambda^{-1}})}\leq 2\sqrt{\pi}\left(\fr{\lambda}{\eps}\right)^{n} $$
when $n\geq 1$. 

Therefore 
\begin{align*}
\|H\|_{L^{2,1}(B_{\lambda}\sm B_{\eps \lambda^{-1}})}\leq& 2\sqrt{\pi} \Big( \sum_{n\geq 1} n^2\lambda^n (|c_n| + |d_{n}|\eps^{-n}) \\
&+ \sum_{n\leq -1} n^2\lambda^{-n}(|c_n|\eps^{n} + |d_{n}|) \Big)\nonumber\\
\leq&\left(\sum_{n\in \mathbb{Z}\sm\{0\}} |n|^3(2\lambda)^{2|n|}\right)^{\fr12}\times \nonumber\\
&\times\left(\sum_{n\leq -1}|n|\eps^{-2|n|}(c_n^2 + d_{-n}^2) + \sum_{n>0} |n|2^{-n}(c_n^2 + d_{-n}^2)\right)^{\fr12}\nonumber \\
\leq& C\|\nabla h\|_{L^2(B_1\sm B_{\eps})}.
\end{align*}
In order to estimate the first term on the left hand side of \eqref{estharmonic}, we note that it follows from the previous estimate, since
\[
\frac1{r} \frac{d}{dr} (rh_r) =-\frac{h_{\theta \theta}}{r^2}
\]
as $h$ is harmonic.

In particular, we can use \eqref{estharmonic} and the duality of the Lorentz spaces $L^{2,1}$ and $L^{2,\infty}$, in order to get
\begin{align*}
\| h_{rr}\|_{L^1(B_{\lambda}\sm B_{\eps \lambda^{-1}})} \le& c \|r^{-1}\|_{L^{2,\infty}(B_{\lambda}\sm B_{\eps \lambda^{-1}})} \| rh_{rr} \|_{L^{2,1}(B_{\lambda}\sm B_{\eps \lambda^{-1}})}\\
\le& c (\| h_r\|_{L^{2,1}(B_{\lambda}\sm B_{\eps\lambda^{-1}})}+\|\nabla h\|_{L^2(B_1 \sm B_\eps)}) .
\end{align*}

In order to show \eqref{estharmonic2}, we note that 
$$h_{xx} = h_{rr}\fr{x^2}{r^2} + h_r\fr{y^2}{r^3} + h_{\theta\theta}\fr{y^2}{r^4} + h_{\theta}\fr{2xy}{r^4} - 2h_{r\theta}\fr{xy}{r^3},$$
$$h_{yy} = h_{rr}\fr{y^2}{r^2} + h_r\fr{x^2}{r^3} + h_{\theta\theta}\fr{x^2}{r^4} - h_{\theta}\fr{2xy}{r^4} + 2h_{r\theta}\fr{xy}{r^3},$$
and 
$$h_{xy} = h_{rr}\fr{xy}{r^2} - h_r\fr{xy}{r^3} - h_{\theta\theta}\fr{xy}{r^4} + h_{\theta}\fr{y^2-x^2}{r^4} + h_{r\theta}\fr{x^2-y^2}{r^3}.$$
Hence we get
\[
|\nabla^2 h| \le C(|h_{rr} |+\fr{|h_r|}{r} +\fr{|h_{\theta\theta}|}{r^2} + \fr{|h_{\theta}|}{r^2} + \fr{|h_{r\theta}|}{r})
\]
and using the same duality argument as above, combined with \eqref{estharmonic}, we get \eqref{estharmonic2}.
\end{proof}

\subsection{Wente estimates on annuli}
In this subsection we use the above estimates for harmonic functions to derive new Wente estimates on annuli.
\begin{lemma}\label{wente}
Let $r<1/4$, $a,b\in W^{1,2}(B_1)$ and let $\psi\in W^{1,2}_0(B_1 \backslash B_r)$ be a solution of 
\[
\Delta \psi = \nabla a \cdot \nabla^\perp b
\]
on $B_1 \backslash B_r$, where $\nabla^\perp :=(-\partial_y,\partial_x)$. Then, for every $r <\lambda <\frac12 $ we have that $\nabla^2 \psi \in W^{2,1}(B_\lambda \backslash B_{r \lambda^{-1}})$ and there exists a constant $C(\lambda)$ so that
\begin{align}
\| \nabla^2 \psi \|_{L^1(B_\lambda \backslash B_{r \lambda^{-1}})} + \| \nabla \psi \|_{L^{2,1}(B_\lambda \backslash B_{r \lambda^{-1}})}\le C(\lambda)\| \nabla a\|_{L^2(B_1)} \| \nabla b\|_{L^2(B_1)}. \label{wenteannulus} 
\end{align}
\end{lemma}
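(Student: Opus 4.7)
The strategy is to decompose $\psi = \tilde\psi + h$, where $\tilde\psi \in W^{1,2}_0(B_1)$ is the solution of the full-disk Dirichlet problem
\[
\Dl \tilde\psi = \D a \cdot \nabla^{\perp} b \ \ \text{in } B_1, \qquad \tilde\psi = 0 \ \ \text{on } \de B_1,
\]
so that $h := \psi - \tilde\psi$ is harmonic on the annulus $B_1 \sm B_r$, with $h = 0$ on $\de B_1$ and $h = -\tilde\psi$ on $\de B_r$. For $\tilde\psi$, the classical Wente theorem, in the form improved to Lorentz/Hardy spaces via \cite{CLMS}, yields
\[
\|\D^2 \tilde\psi\|_{L^1(B_1)} + \|\D \tilde\psi\|_{L^{2,1}(B_1)} + \|\tilde\psi\|_{L^\infty(B_1)} \leq C \|\D a\|_{L^2(B_1)} \|\D b\|_{L^2(B_1)}.
\]
Combined with the standard $L^2$ energy estimate for $\psi$ on the annulus with zero boundary data, this gives $\|\D h\|_{L^2(B_1\sm B_r)} \leq C \|\D a\|_{L^2}\|\D b\|_{L^2}$, reducing matters to an annular estimate on the harmonic function $h$.

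Proposition \ref{harm} applied to $h$ with $\e = r$ directly bounds $\|h_\ta/r\|_{L^{2,1}}$, $\|h_{\ta\ta}/r\|_{L^{2,1}}$ and $\|h_{r\ta}\|_{L^{2,1}}$ on $B_\lambda \sm B_{r\lambda^{-1}}$ by $C(\lambda)\|h_r\|_{L^2(B_1\sm B_r)}$, and \eqref{estharmonic2} reduces $\|\D^2 h\|_{L^1(B_\lambda\sm B_{r\lambda^{-1}})}$ to a bound on $\|h_r\|_{L^{2,1}(B_\lambda\sm B_{r\lambda^{-1}})} + \|h_r\|_{L^2(B_1\sm B_r)}$. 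The only missing ingredient is therefore an estimate of $\|h_r\|_{L^{2,1}(B_\lambda\sm B_{r\lambda^{-1}})}$, which is not directly supplied by Proposition \ref{harm}.

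To handle this, expand $h$ in its Laurent-Fourier series $h = c_0 + d_0 \log r + \sum_{n \neq 0}(c_n r^n + d_n r^{-n})e^{in\ta}$. The contribution to $h_r$ from the modes with $n \neq 0$ is bounded in $L^{2,1}(B_\lambda \sm B_{r\lambda^{-1}})$ by $C(\lambda)\|h_r\|_{L^2(B_1\sm B_r)}$ via the same Hilbert-space computation used in the proof of Proposition \ref{harm}, with the factor $|n|$ in place of $n^2$. For the logarithmic mode, averaging the boundary values $h(1,\cdot) = 0$ and $h(r,\cdot) = -\tilde\psi(r,\cdot)$ in $\ta$ forces $c_0 = 0$ and
\[
d_0 = -\frac{1}{2\pi \log r}\int_0^{2\pi} \tilde\psi(r,\ta) \id \ta, \qquad |d_0| \leq \frac{\|\tilde\psi\|_{L^\infty(B_1)}}{|\log r|}.
\]
A direct rearrangement computation, as in the appendix of \cite{LR}, gives $\|1/r\|_{L^{2,1}(B_\lambda\sm B_{r\lambda^{-1}})} \leq C\log(\lambda^2/r)$, and since $\log(\lambda^2/r)/|\log r| \leq C(\lambda)$ for $r < 1/4$ and $\lambda < 1/2$, we conclude
\[
\|d_0/r\|_{L^{2,1}(B_\lambda\sm B_{r\lambda^{-1}})} \leq C(\lambda)\|\tilde\psi\|_{L^\infty(B_1)} \leq C(\lambda)\|\D a\|_{L^2}\|\D b\|_{L^2}.
\]

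Combining $\D\psi = \D\tilde\psi + \D h$ (with $|\D h| \leq |h_r| + |h_\ta|/r$) and $\D^2\psi = \D^2\tilde\psi + \D^2 h$ with the Wente bound, Proposition \ref{harm}, estimate \eqref{estharmonic2} and the bound on $\|h_r\|_{L^{2,1}}$ just derived yields the claimed inequality. The main obstacle is the logarithmic mode: the $L^{2,1}$ norm of $1/r$ on a degenerating annulus diverges like $\log(\lambda^2/r)$, far faster than its $L^2$ norm (which only diverges like its square root), so $\|h_r\|_{L^{2,1}}$ cannot be controlled by $\|h_r\|_{L^2}$ uniformly as $r \to 0$. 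Closing the estimate requires the extra $L^\infty$ control on $\tilde\psi$ coming from Wente's theorem, which forces $|d_0| \lesssim 1/|\log r|$ and precisely compensates the growth of $\|1/r\|_{L^{2,1}}$.
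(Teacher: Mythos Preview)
Your proof is correct and follows essentially the same decomposition as the paper: write $\psi=\tilde\psi+h$ with $\tilde\psi\in W^{1,2}_0(B_1)$ solving the full-disk Wente problem and $h$ harmonic on the annulus with boundary data $0$ on $\de B_1$ and $-\tilde\psi$ on $\de B_r$, then use CLMS for $\tilde\psi$ and Proposition~\ref{harm} for $h$.

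The only difference is in how the $L^{2,1}$ bound on $\D h$ is obtained. The paper simply quotes this bound (for both $\D\psi$ and $\D w=\D h$) from the proof of Lemma~2.1 in \cite{LR}, and then feeds it into \eqref{estharmonic2}. You instead rederive it by hand via the Fourier expansion, isolating the logarithmic mode $d_0/r$ and controlling $|d_0|\leq \|\tilde\psi\|_{L^\infty}/|\log r|$ from the $L^\infty$ Wente bound, which exactly cancels the $\log(\lambda^2/r)$ growth of $\|1/r\|_{L^{2,1}}$. This is precisely the mechanism hidden inside the Laurain--Rivi\`ere lemma, so your argument is more self-contained but not genuinely different. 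One small remark: rather than invoking a ``standard $L^2$ energy estimate for $\psi$'' on the annulus, it is cleaner to observe that $h$ and $-\tilde\psi$ share boundary values on $\de(B_1\sm B_r)$, so Dirichlet's principle gives $\|\D h\|_{L^2(B_1\sm B_r)}\leq\|\D\tilde\psi\|_{L^2(B_1)}$ directly.
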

\begin{proof}
The estimate for the $L^{2,1}$-norm of $\nabla \psi$ can be found in Lemma 2.1 of \cite{LR}. Hence it remains to show the $L^1$-estimate for $\nabla^2 \psi$ and for this we first consider the unique solution $\varphi\in W^{1,2}_0(B_1)$ of
\[
\Delta \varphi =\nabla a \cdot \nabla^\perp b.
\]
It follows from the results in \cite{CLMS} that
\[
 \| \nabla \varphi \|_{L^2(B_1)} +\| \nabla^2 \varphi \|_{L^1(B_1)} \le C \| \nabla a\|_{L^2(B_1)} \| \nabla b\|_{L^2(B_1)}.
\]
Next we let $w$ be the harmonic function with $w|_{\partial B_1} =0$ and $w|_{\partial B_r}=-\varphi$. 
It was shown by Laurain and Rivi\`ere (see the proof of Lemma 2.1 in \cite{LR}) that for every $r <\lambda <1$
\[
\| \nabla w\|_{L^2(B_1\backslash B_{r})}+\|\nabla w \|_{L^{2,1}(B_1\backslash B_{r \lambda^{-1}})}\le C(\lambda)\| \nabla a\|_{L^2(B_1)} \| \nabla b\|_{L^2(B_1)}.
\]
Combining this with Proposition \ref{harm} we get that for every $\lambda$ as in the statement of the Lemma
\[
\| \nabla^2 w \|_{L^1(B_\lambda \backslash B_{r \lambda^{-1}})}\le C(\lambda)\| \nabla a\|_{L^2(B_1)} \| \nabla b\|_{L^2(B_1)}.
\]
Since $\psi=\varphi+w$ the above estimates imply the claim.
\end{proof}

\section{Proof of the global estimates}
In this section we prove Theorems \ref{main theorem}, \ref{corollary theorem} and \ref{secder}. Central to our argument will be the following result of Rivi\`ere-Laurain \cite{LR}. This is not stated as a separate result in their paper however it can be found as the last estimate in the proof of Theorem 0.2 in their paper (assuming $f\equiv 0$ but the general case follows from standard elliptic theory). 

\begin{theorem}[Laurain-Rivi\`ere]\label{lau-riv}
There exists $\eps>0$ such that for all $\lambda,r,R>0$ satisfying $2r<R$, $\lambda<1$ and $\Om\in L^2(B_R\sm B_r , so(n)\otimes \bigwedge^1 \R^2)$, $f\in L\log L(B_R\sm B_r)$, $u\in W^{1,2}(B_R\sm B_r,\R^n)$ with 
$$-\Dl u = \Om \cdot \D u + f \,\,\,\,\,\,\text{and}\,\,\,\,\,\,\sup_{r<\rho<\fr{R}{2}} \int_{B_{2\rho}\sm B_{\rho}} |\Om|^2 \leq \eps,$$
there exists some $C=C(\lambda,n)<\infty$ such that 
$$\left\| \fr{1}{\rho}\pl{u}{\theta}\right\|_{L^{2,1}(B_{\lambda R}\sm B_{\fr{r}{\lambda}})} \leq C\left( 1+\|\Om\|_{L^2(B_R\sm R_r)}\right)(\|\D u\|_{L^2(B_R\sm B_r)} + \|f\|_{L\log L(B_R\sm B_r)}).$$ 
\end{theorem}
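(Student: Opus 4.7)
The plan is to pass to cylindrical coordinates, apply Rivi\`ere's gauge/conservation-law machinery globally on the resulting long cylinder (exploiting the dyadic smallness of $\Om$), and Fourier decompose in $\theta$ so that the sought estimate for $\rho^{-1}\partial_\theta u$ only involves non-zero angular modes, which enjoy exponential decay from the boundary of the cylinder.

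First I would reduce to $f\equiv 0$. Let $v$ solve $-\Dl v=f$ on $B_R$ with zero boundary data; the $L\log L$--$L^{2,1}$ duality (via Stein's $L\log L$ maximal theorem combined with Calder\'on--Zygmund theory) gives $\|\D v\|_{L^{2,1}(B_R)}\le C\|f\|_{L\log L(B_R)}$. Replacing $u$ by $u-v$ yields a new equation with inhomogeneous term $\Om\cdot\D v$, whose contribution is already controlled by the right-hand side of the target bound via H\"older in Lorentz spaces. Hence it suffices to treat the homogeneous equation.

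Next, introduce $s:=-\log\rho$, sending $B_R\sm B_r$ conformally to the cylinder $C_T:=[s_R,s_r]\times S^1$ of length $T=\log(R/r)$; the dyadic hypothesis becomes $\int_{[k,k+1]\times S^1}|\Om|^2\le\eps$ on every unit sub-cylinder. On each such piece Rivi\`ere's construction produces matrices $A_k\in L^\infty\cap W^{1,2}(GL(n))$ and $B_k\in W^{1,2}$ satisfying a div-curl identity of the form $\operatorname{div}(A_k\,\D u+B_k\,\Db u)=0$ with uniform bounds. These local data are glued into global $A,B$ on $C_T$ via transition functions close to the identity (thanks to uniform dyadic smallness) together with a holonomy-killing change of trivialisation adapted to the cylinder topology, producing estimates independent of the length $T$.

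Finally, decompose $u(s,\theta)=\sum_{n\in\mathbb{Z}}u_n(s)e^{in\theta}$. Since $\partial_\theta u=\sum_{n\neq 0}in\,u_n\,e^{in\theta}$, the zeroth mode $u_0$ --- which in general grows linearly in $s$ and is precisely what prevents any uniform-in-$T$ $L^{2,1}$ bound on the full gradient --- drops out of the quantity we need to estimate. Projecting the conservation law onto the $n$-th mode yields a second-order ODE in $s$ with principal part $(A_{\mathrm{avg}}u_n')'-n^2 A_{\mathrm{avg}}u_n$ and perturbative cross-coupling between modes bounded using the smallness of $\|\Om\|_{L^2}$; solutions propagate from the two ends of $C_T$ as $e^{-|n|(s-s_R)}$ and $e^{-|n|(s_r-s)}$. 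Applying Proposition \ref{harm} to the harmonic profile of each non-zero mode and Lemma \ref{wente} to the Wente-type coupling yields an estimate of the form $\||n|\,u_n\|_{L^{2,1}(C_T')}\le C(2\lambda)^{|n|}\|u_n\|_{L^2(C_T)}$ on the shrunken cylinder corresponding to $B_{\lambda R}\sm B_{r/\lambda}$, and summing over $n\neq 0$ via Cauchy--Schwarz (the series $\sum|n|^2(2\lambda)^{2|n|}$ converges since $\lambda<1$) produces the stated bound. The main obstacle is the global gauge construction on $C_T$ with constants independent of $T$: Rivi\`ere's construction is inherently local, and pushing it through uniformly on an arbitrarily long cylinder requires the dyadic smallness of $\Om$ to drive a Uhlenbeck-type continuation argument together with careful handling of the non-trivial annular topology --- this is the technical heart of the proof, and the whole reason the theorem gives estimates that are independent of the length of the neck.
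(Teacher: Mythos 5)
The first thing to note is that the paper does not prove this statement at all: it is quoted from Laurain--Rivi\`ere \cite{LR} (it is the last estimate in their proof of Theorem 0.2, with the $f$-term handled by standard elliptic theory), and the method behind it is the one mirrored in the proof of Theorem \ref{secder} in this paper: a covering argument over sub-annuli reduces to the case $\int_{B_R\sm B_r}|\Om|^2\le\eps$ (this is exactly where the factor $1+\|\Om\|_{L^2}$ in the stated constant comes from), then $\Om$ is extended by zero to the whole disc and Rivi\`ere's gauge theorem (Theorem I.4 of \cite{Riv1}) is applied on the simply connected disc, so no gluing and no holonomy question ever arises; the conclusion then follows from a Hodge decomposition, Wente/CLMS estimates on the disc and on annuli (as in Lemma \ref{wente}), and the explicit Fourier expansion of harmonic functions on an annulus, in which the angular derivative kills the $\log$-mode -- your observation about the zeroth mode is correct and enters precisely there. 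The genuine gap in your sketch is the step you yourself call the technical heart: a global gauge on the cylinder $C_T$ with constants independent of $T$, obtained by gluing Rivi\`ere gauges on unit sub-cylinders. Nothing in your argument makes this work: only the dyadic energies of $\Om$ are $\le\eps$, so $\|\Om\|_{L^2(C_T)}$ is not small, the number of pieces is of order $T$, and naive gluing accumulates errors growing with $T$; the annular topology is an additional obstruction which the actual proof deliberately sidesteps by the extension-by-zero trick. The same absence of global smallness undercuts your mode-by-mode ODE analysis: the cross-coupling between Fourier modes induced by $\Om$ is only dyadically small, hence not a perturbation that can be summed over a cylinder of unbounded length, and the exponential bound $\||n|u_n\|_{L^{2,1}}\le C(2\lambda)^{|n|}\|u_n\|_{L^2}$ is justified for harmonic functions (Proposition \ref{harm}), not for solutions of the coupled system. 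Note also that your route, if it worked, would give a constant independent of $\|\Om\|_{L^2}$, which should already make you suspicious given the form of the stated estimate.

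A second, smaller gap is the reduction to $f\equiv 0$. After subtracting $v$ with $-\Dl v=f$, the new source term $\Om\cdot\D v$ lies, by Lorentz--H\"older ($L^2\cdot L^{2,1}\In L^1$), only in $L^1$, and an $L^1$ right-hand side is exactly the borderline at which $L^{2,1}$ control of the gradient fails (one only gets $L^{2,\infty}$); so this term cannot be dismissed as ``already controlled'' and discarded. Either it must be kept as part of the system (using that $\D v$ is controlled in $L^{2,1}$), or, as in the proof of Theorem \ref{secder}, $f$ is carried through the gauge-transformed equation and handled there (the piece $\Dl\psi_2=-Af$ treated by elliptic theory on the disc).
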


Using this Theorem and the previous results from section 3, we are now in a position to prove Theorem \ref{secder}.
\begin{proof}[Proof of Theorem \ref{secder}:]
The estimate for the $L^{2,1}$-norm of $\nabla u$ follows directly by combining Theorem \ref{lau-riv} with Proposition \ref{hopfest} and we are left with: 
\begin{align*}
\| \D u \|_{L^{2,1}(B_{\lambda}\sm B_{r/\lambda})}\leq& C\|\Om\|_{L^2(B_1\sm B_r)}\Big(\|\D u\|_{L^2(B_1\sm B_r )}+\|f\|_{L\log L(B_1\sm B_r)}\Big)\\
&+C\||\phi|^{1/2}\|_{L^{2,1}(B_1\sm B_r)}.
\end{align*}

It remains to show the $L^1$-estimate for $\nabla^2 u$. We first assume that 
\[
\int_{B_1\sm B_r}|\Om|^2 \leq \eps
\]
and we extend $\Om$ by zero to all of $B_1$. It follows from Theorem I.4 in \cite{Riv1}, that for $\eps$ small enough there exists $A\in W^{1,2}\cap L^\infty(B_1,GL(n))$ so that
\[
\text{div} (\nabla A-A \Om)=0
\]
and 
\[
\int_{B_1} |\nabla A|^2 \,\ dx+\text{dist} (A,SO(n))+\text{dist} (A^{-1},SO(n))\le C\int_{B_1\sm B_r}|\Om|^2.
\]
Moreover, there exists $B\in W^{1,2}(B_1,M(n))$ so that
\[
\nabla A - A \Omega =\nabla^\perp B
\]
and 
\[
\| \nabla B \|^2_{L^2(B_1)} \le C\int_{B_1\sm B_r}|\Om|^2.
\]

Next, we extend $u-\dashint_{B_1\sm B_r} u$ to $\ti{u}:B_1\to \R^n$ which satisfies 
$$\|\D\ti{u}\|_{L^{2}(B_1)}\leq C\|\D u\|_{L^2(B_1\sm B_r)}$$ and $\D \ti{u} = \D u$ in $B_1\sm B_r$. 

Consider the Hodge decomposition of $A\D\ti{u}$ by $C\in W^{1,2}_0(B_1)$, $D\in W^{1,2}(B_1)$ of
\[
A\nabla  \ti{u} = \nabla C +\nabla^\perp D
\]
with 
\[
\| \nabla C \|_{L^{2}(B_1)}+\| \nabla D \|_{L^{2}(B_1)}=  \| \nabla \ti{u} \|_{L^{2}(B_1)}\leq C\|\D u\|_{L^2(B_1\sm B_r)}.
\]
The $L^1$-estimate for $\nabla^2 D$ follows since we have on $B_1$
\[
\Delta D = \nabla A \cdot \nabla^\perp  \ti{u}.
\]
Writing $D=h+\varphi$ with $h$ harmonic in $B_1$ with $h=D$ on $\partial B_1$ and $\varphi \in W^{1,2}_0(B_1)$ satisfies
\[
\Delta \varphi = \nabla A \cdot \nabla^\perp  \ti{u},
\]
we get from the results in \cite{CLMS}
\begin{align*}
\|  \varphi \|_{W^{2,1}(B_1)}
\le& C\| \nabla A\|_{L^2(B_1)} \| \nabla u\|_{L^2(B_1\sm B_r)}.
\end{align*}
Moreover, we get from standard estimates for harmonic functions and the fact that
\[
\| \nabla h \|_{L^2(B_1)}\le \| \nabla D\|_{L^2(B_1)},
\]
which follows since $h$ is harmonic and agrees with $D$ on the boundary, the estimate
\begin{align*}
\| \nabla^2 h\|_{L^1(B_\lambda)}\le& C(\lambda) \| \nabla h\|_{L^2(B_1)}\\
\le& C(\lambda)\| \nabla D\|_{L^2(B_1)} \\
\le& C(\lambda) \| \nabla u\|_{L^{2}(B_1\sm B_r)}
\end{align*}
and therefore 
\[
\| \nabla D\|_{W^{1,1}(B_{\lambda})}\le C(\lambda)\| \nabla u \|_{L^{2}(B_1\sm B_r)}[1+\|\Om\|_{L^2(B_1\sm B_r)}].
\]
By the estimate for $D$ and the $L^{2,1}$-bound for $\nabla u$ we see that for any $r^{1/2} <\lambda < 1/2$ we have
\begin{align*}
\|\D C\|_{L^{2,1}(B_{\lambda}\sm B_{r\lambda^{-1}})} \leq& C(1+\|\Om\|_{L^2(B_1\sm B_r)})(\|\D u\|_{L^2(B_1 )} +\|f\|_{L\log L(B_1\sm \B_r)})\\
&+C\||\phi|^{1/2}\|_{L^{2,1}(B_1\sm B_r)}
\end{align*}

Moreover the function $C$ solves on $B_1 \sm B_{r}$
\[
\Delta C= \nabla^\perp B \cdot \nabla  \ti{u}-Af
\]
We decompose $C=\psi_1+\psi_2+v$, where $\psi_{1,2}\in W^{1,2}_0(B_1\sm B_{r})$, $\Delta \psi_1 =\nabla^\perp B \cdot \nabla  \ti{u}$, $\Delta \psi_2=-Af$ and $v$ is harmonic in $B_1 \sm B_{r}$ with $v=0$ on $\partial B_1$, $v=C$ on $\partial B_{r}$. Applying Lemma 2.1 in\cite{LR}, together with Lemma \ref{wente}, we get for every $\lambda$ as above
\[
\|  \D^2 \psi_1 \|_{L^1(B_{\lambda}\sm B_{r\lambda^{-1}})}+\|  \D \psi_1 \|_{L^{2,1}(B_{\lambda}\sm B_{r\lambda^{-1}})}
\le C \| \nabla B\|_{L^2(B_1)}\| \nabla \ti{u}\|_{L^2(B_1)}.
\]
Moreover, using standard elliptic theory (by extending $f$ and $\psi_2$ to zero in $B_r$) we get
\[
\|  \D^2 \psi_2 \|_{L^1(B_{\lambda}\sm B_{r\lambda^{-1}})}+\|  \D \psi_2 \|_{L^{2,1}(B_{\lambda}\sm B_{r\lambda^{-1}})}
\le C \| f\|_{L\log L(B_1\sm B_r)}.
\]
In particular we also conclude that
\begin{align*}
\|\nabla v \|_{L^{2,1}(B_{\lambda} \sm B_{r\lambda^{-1}})} \le& C (\| \nabla C \|_{L^{2,1}(B_{\lambda}\sm B_{r\lambda^{-1}})}+\| \nabla \psi_1 \|_{L^{2,1}(B_{\lambda}\sm B_{r\lambda^{-1}})}\\
&+\| \nabla \psi_2 \|_{L^{2,1}(B_{\lambda}\sm B_{r\lambda^{-1}})})\\
\le& C(\lambda)([1+\| \Om \|_{L^2(B_1 \sm B_r)}](\|\D u\|_{L^2(B_1\sm B_r )}+ \|f\|_{L\log L(B_1\sm B_r)})\\
&+\||\phi|^{1/2}\|_{L^{2,1}(B_1\sm B_r)}).
\end{align*}
Hence we conclude from Proposition \ref{harm} that 
\begin{align*}
\| \nabla^2 v\|_{L^1(B_{\lambda^2} \sm B_{r\lambda^{-2}})}\le& C(\lambda)([1+\| \Om \|_{L^2(B_1 \sm B_r)}](\|\D u\|_{L^2(B_1\sm B_r )}+ \|f\|_{L\log L(B_1\sm B_r)})\\
&+\||\phi|^{1/2}\|_{L^{2,1}(B_1\sm B_r)}).
\end{align*}
Combining the estimates for $\psi_1$, $\psi_2$ and $v$, we obtain
\begin{align*}
\| \nabla^2 C\|_{L^1(B_{\lambda^2} \sm B_{r\lambda^{-2}})}\le& C(\lambda)([1+\| \Om \|_{L^2(B_1 \sm B_r)}](\|\D u\|_{L^2(B_1\sm B_r )}+ \|f\|_{L\log L(B_1\sm B_r)})\\
&+\||\phi|^{1/2}\|_{L^{2,1}(B_1\sm B_r)}).
\end{align*}
Combining now the estimates for the $L^1$-norms of $\nabla^2 C$ and $\nabla^2 D$ with the formula 
\[
\nabla C+\nabla^\perp D= A\nabla \ti{u}=A\D u 
\]
in $B_\lambda \sm B_{r/\lambda}$ and the $L^{2,1}$-bound for $\nabla u$, we get the desired $L^1$-estimate for $\nabla^2 u$.

In the general case we use the same covering argument as Laurain-Rivi\`ere in their proof of Theorem 0.2 in \cite{LR} in order to reduce the general case to the previously considered one. 
\end{proof}
Now we are in a position to prove Theorem \ref{main theorem} by combining the uniform estimate on annuli with the bubbling argument from the appendix.
\begin{proof}[Proof of Theorem \ref{main theorem}:]
We argue by contradiction and we assume that there is a sequence $\{u_k\}$, $\{\Om_k\}$, $\{f_k\}$ as in the Theorem with
\[
\|\D u_k\|_{L^2(B_1)} +\|\Om_k\|_{L^2(B_1)}+\|f_k\|_{L\log L(B_1)} + \||\phi_k|^{\fr12}\|_{L^{2,1}(K)}\leq\Lambda<\infty
\]
and 
\[
\| \D^2 u_k\|_{L^{1}(K)}+\| \nabla u_k \|_{L^{2,1}(K)} \to \infty.
\] 
From a standard bubbling argument (see e.g. the appendix) it follows that one can decompose $B_1$ into a collection of bubble, neck and body regions for which we can apply $\eps$-regularity theory (see Theorem 1.6 in \cite{ST}), our improved neck results (see Theorem \ref{secder}) and simple covering arguments to conclude a global estimate for 
\[
\| \D^2 u_k\|_{L^{1}(K)}+\| \nabla u_k \|_{L^{2,1}(K)} ,
\] 
which contradicts the above.

More precisely, the bubbling argument decomposes $B_1$ into regions where bubbles form, regions where we have locally uniformly small $L^2$-norm of $\|\Omega_k\|_{L^2}$ and intermediate annular regions, on which we have smallness of $\|\Omega_k\|_{L^2}$ on each dyadic sub-annlus. On the first two regions it follows from Theorem 1.6 in \cite{ST} that the $W^{2,1}$-norm of $u_k$ is locally uniformly bounded, and on the intermediate annuli Theorem \ref{secder} yields the same conclusion. This yields the desired contradiction.
\end{proof}

In the following Proposition we find a condition under which the square root of the Hopf differential is bounded in $L^{2,1}$.
\begin{prop}\label{omegaperp}
There exists $C>0$ such that for all $\lambda\le \frac12$, $\Om\in L^2(B_1, so(n)\otimes \bigwedge^1 \R^2)$, $f,g\in L^2 (B_1,\R^n)$ and $u\in W^{1,2}(B_1,\R^n)$ solving
\begin{align*}
-\Dl u =& \Om \cdot \D u + f\ \ \ \text{and} \\
0=& \Om \cdot \nabla^\perp u+g,
\end{align*}
we have 
\begin{eqnarray*}
\left\| |\phi|^{1/2} \right\|_{L^{2,1}(B_{\lambda })} \leq  C\lambda^{1/2} \|\D u\|_{L^2(B_1)}(\|\D u\|_{L^2(B_1 )}+\|f\|_{L^2(B_1)}+\|g\|_{L^2(B_1)}) .
 \end{eqnarray*}
\end{prop}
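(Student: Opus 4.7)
\textbf{Proof plan for Proposition \ref{omegaperp}.}

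\emph{Overall strategy.} The plan is to show that $\phi$ is ``almost holomorphic'' in the sense that $\db\phi$ is controlled in $L^{1}$ thanks to the two equations, and then to combine a subharmonicity argument for the holomorphic part of $\phi$ with a Cauchy-transform bound for its $\db$-corrective part in order to upgrade the trivial inclusion $|\phi|^{1/2}\in L^{2}$ to $|\phi|^{1/2}\in L^{2,1}$ on a smaller ball, with the decisive $\lambda^{1/2}$ scaling.

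\emph{Computing $\db\phi$.} Writing $\phi = 4\la \partial_z u,\partial_z u\ra$ with the complex-bilinear pairing and using $4\,\partial_z\db = \Dl$, I first get $\db\phi = 2\la \partial_z u,\Dl u\ra$. Substituting $-\Dl u = \Om\cdot\D u + f$ and expanding $\partial_z u = \tfrac12(u_x - iu_y)$, the antisymmetry of $\Om_1,\Om_2\in so(n)$ kills the diagonal terms $\la u_x,\Om_1 u_x\ra = \la u_y,\Om_2 u_y\ra = 0$. The remaining off-diagonal terms $\la u_x,\Om_2 u_y\ra$ and $\la u_y, \Om_1 u_x\ra$ are then reduced using the constraint equation in the form $\Om_2 u_x = \Om_1 u_y - g$ and $\Om_1 u_y = \Om_2 u_x + g$, and antisymmetry a second time; this collapses everything to the clean identity
\[
\la \partial_z u,\Om\cdot\D u\ra = i\la g,\partial_z u\ra,
\qquad\text{hence}\qquad
\db\phi = -2\la \partial_z u,\, f+ig\ra.
\]
Consequently $|\db\phi|\leq |\D u|\,(|f|+|g|)$ pointwise and Cauchy--Schwarz gives $\|\db\phi\|_{L^1(B_1)} \leq \|\D u\|_{L^2(B_1)}(\|f\|_{L^2(B_1)}+\|g\|_{L^2(B_1)})$. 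Together with the trivial $\|\phi\|_{L^1(B_1)}\leq \|\D u\|_{L^2(B_1)}^{2}$, one obtains a joint $L^{1}$ bound on $\phi$ and $\db\phi$ by $\|\D u\|_{L^2}(\|\D u\|_{L^2}+\|f\|_{L^2}+\|g\|_{L^2})$.

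\emph{Decomposition and the $L^{2,1}$-bound.} Next I split $\phi = H + \psi$ on $B_{1}$, where $\psi = \Ca(\chi_{B_1}\db\phi)$ is the Cauchy--Pompeiu transform of the $\db$-data and $H:= \phi-\psi$ is holomorphic on $B_{1}$. The standard weak bound $\Ca : L^{1}\to L^{2,\infty}$ gives $\|\psi\|_{L^{2,\infty}(B_1)} \leq C\|\db\phi\|_{L^1(B_1)}$, whereas $|H|^{1/2}$ is subharmonic (as $H$ is holomorphic), so the mean value inequality yields
\[
\||H|^{1/2}\|_{L^\infty(B_{3/4})} \leq C\||H|^{1/2}\|_{L^2(B_1)} = C\|H\|_{L^1(B_1)}^{1/2} \leq C\bigl(\|\phi\|_{L^1(B_1)} + \|\db\phi\|_{L^1(B_1)}\bigr)^{1/2},
\]
after absorbing $\|\psi\|_{L^1(B_1)}\leq C\|\psi\|_{L^{2,\infty}(B_1)}$. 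For the singular part I use that $\lambda_{|\psi|^{1/2}}(s)=\lambda_\psi(s^{2})\leq \min\{|B_\lambda|,\|\psi\|_{L^{2,\infty}}^{2}s^{-4}\}$, whose integration (splitting at $s_{*}=(\|\psi\|_{L^{2,\infty}}/|B_\lambda|^{1/2})^{1/2}$) yields
\[
\||\psi|^{1/2}\|_{L^{2,1}(B_\lambda)} \leq C|B_\lambda|^{1/4}\|\psi\|_{L^{2,\infty}(B_\lambda)}^{1/2}\leq C\lambda^{1/2}\|\db\phi\|_{L^1(B_1)}^{1/2}.
\]
For the holomorphic part, $\||H|^{1/2}\|_{L^{2,1}(B_\lambda)} \leq C|B_\lambda|^{1/2}\||H|^{1/2}\|_{L^\infty(B_{1/2})}\leq C\lambda^{1/2}(\|\phi\|_{L^1}+\|\db\phi\|_{L^1})^{1/2}$ (using $\lambda\le \lambda^{1/2}$ for $\lambda\leq 1/2$). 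The triangle inequality $|\phi|^{1/2}\leq |H|^{1/2}+|\psi|^{1/2}$ and the $L^{1}$-bounds of the previous step then produce the stated estimate, where the final factorization into $\|\D u\|_{L^2}\cdot(\|\D u\|_{L^2}+\|f\|_{L^2}+\|g\|_{L^2})$ is obtained from the elementary inequalities $\sqrt{a+b}\leq\sqrt a+\sqrt b$ and $\sqrt{AB}\leq \tfrac12(A+B)$.

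\emph{Main obstacle.} The real engine of the proof is the cancellation in the computation of $\db\phi$. Without the second equation $\Om\cdot\Db u = -g$, the antisymmetry of $\Om$ alone does \emph{not} eliminate the mixed terms $\la u_x,\Om_2 u_y\ra$ and $\la u_y,\Om_1 u_x\ra$, and $\db\phi$ would only be estimated by $|\Om||\D u|^{2}$, which controls nothing of the desired form (the counterexample of Laurain--Rivi\`ere \cite{LR} shows this is genuinely sharp). Once this cancellation is secured, the remaining difficulty is harmonic-analytic: promoting $\phi,\db\phi\in L^{1}$ to $|\phi|^{1/2}\in L^{2,1}$ on a smaller ball. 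This is where the Cauchy--Pompeiu splitting combined with the subharmonicity of $|H|^{1/2}$ and the nonlinear interpolation $\psi\in L^{2,\infty}\Longrightarrow|\psi|^{1/2}\in L^{2,1}_{\loc}$ (with the crucial $|B_\lambda|^{1/4}$ factor) play an essential role.
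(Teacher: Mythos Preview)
Your argument follows the same path as the paper's: both compute $\db\phi$ via the cancellation coming from the antisymmetry of $\Om$ together with the second equation (the paper gets $\db\phi=-2\la f+g,u_z\ra$, you get $-2\la \partial_z u,f+ig\ra$; either way $\|\db\phi\|_{L^1}\le C\|\D u\|_{L^2}(\|f\|_{L^2}+\|g\|_{L^2})$), and both then pass to $|\phi|^{1/2}\in L^{2,1}(B_\lambda)$ through the intermediate fact that $\phi\in L^{2,\infty}$ on an interior ball, combined with the Lorentz identity $\||\phi|^{1/2}\|_{L^{4,\infty}}=\|\phi\|_{L^{2,\infty}}^{1/2}$ and the embedding $L^{4,\infty}(B_\lambda)\hookrightarrow L^{2,1}(B_\lambda)$ with the $|B_\lambda|^{1/4}\simeq\lambda^{1/2}$ factor. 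The only difference is packaging: the paper obtains $\phi\in L^{2,\infty}(B_{1/2})$ by a one-line appeal to ``elliptic regularity'' for $\db$, whereas you unpack this into a Cauchy--Pompeiu part $\psi$ (handled by the weak-type bound $\mc C:L^1\to L^{2,\infty}$) and a holomorphic part $H$ (handled by subharmonicity of $|H|^{1/2}$), which is precisely what that elliptic step contains.

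One small slip: your last sentence does not work as stated. What your argument (and, correctly read, the paper's) actually yields is
\[
\||\phi|^{1/2}\|_{L^{2,1}(B_\lambda)}\le C\lambda^{1/2}\bigl[\|\D u\|_{L^2}(\|\D u\|_{L^2}+\|f\|_{L^2}+\|g\|_{L^2})\bigr]^{1/2},
\]
and the inequalities $\sqrt{a+b}\le\sqrt a+\sqrt b$, $\sqrt{AB}\le\tfrac12(A+B)$ do \emph{not} convert this square root into the product $\|\D u\|_{L^2}(\|\D u\|_{L^2}+\|f\|_{L^2}+\|g\|_{L^2})$ claimed in the Proposition (the two are simply not comparable). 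This is a cosmetic discrepancy already present in the paper's display (where the exponent $1/2$ is dropped when passing from $\||\phi|^{1/2}\|_{L^{4,\infty}}$ to $\|\phi\|_{L^{2,\infty}}$), and it is harmless for every downstream use in Theorems~\ref{main theorem} and~\ref{corollary theorem}, where only a bound depending on the listed norms is needed.
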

\begin{proof}
We calculate
    \begin{align*}
\db \phi =& 2 \la \Delta u, u_z \ra =-2  \la \Omega_x u_x +\Omega_y u_y, u_x-i u_y \ra -2\la f, u_z \ra\\
=& -2 \la \Omega_y u_y, u_x \ra +2i \la \Omega_x u_x, u_y \ra -2\la f, u_z \ra\\
=& -2\la f+g, u_z \ra ,
\end{align*}
where we used that
\[
\Omega \cdot \nabla^\perp u= -\Omega_x u_y + \Omega_y u_x=-g.
\]
In particular, we conclude that
\[
\| \db \phi\|_{L^1(B_1)} \le c \| \nabla u\|_{L^2(B_1)}(\|f\|_{L^2(B_1)} +\| g\|_{L^2(B_1)})
\]
and hence it follows from elliptic regularity theory that 
\[
\| \phi\|_{L^{2,\infty}(B_{1/2})} \le c\| \nabla u\|_{L^2(B_1)}(\|\nabla u\|_{L^2(B_1)}+\|f\|_{L^2(B_1)} +\| g\|_{L^2(B_1)}).
\]
From this we get that for every $\lambda \le \frac12$ we have the estimate
\begin{align*}
\| |\phi|^{1/2} \|_{L^{2,1}(B_\lambda)} \le& c\lambda^{1/2} \| |\phi|^{1/2}\|_{L^{4,\infty}(B_\lambda)}\le c\lambda^{1/2} \| \phi\|_{L^{2,\infty}(B_\lambda)}\\
\le& c\lambda^{1/2} \| \nabla u\|_{L^2(B_1)}(\|\nabla u\|_{L^2(B_1)}+\|f\|_{L^2(B_1)} +\| g\|_{L^2(B_1)})
\end{align*}
and this finishes the proof of the Proposition.
\end{proof}
\begin{proof}[Proof of Theorem \ref{corollary theorem}:]
The result follows from combining Theorem \ref{main theorem} and Proposition \ref{omegaperp}.
\end{proof}
It is of course a natural question to ask if there are examples of solutions of the system
\begin{align*}
-\Dl u =& \Om \cdot \D u + f\ \ \ \text{and} \\
0=& \Om \cdot \nabla^\perp u+g,
\end{align*}
where $\Om$, $f$ and $g$ are as in the above Proposition. In the following Lemma we show that every critical point of a conformally invariant Lagrangian with quadratic growth in the gradient is a solution of this system with $f=g=0$. In particular, important examples are harmonic maps and surfaces of prescribed mean curvature $H\in L^\infty$.
\begin{lemma}\label{crit}
Let $N^k\subset \R^n$ be a $C^2$ submanifold and let $\omega$ be a $C^1$ two-form on $N$ with bounded $L^\infty$-norm of $d\omega$. Then every critical point $u\in W^{1,2}(B_1,N)$ of
\begin{align}
F(u)=\frac12 \int_{B_1} \left( |\nabla u|^2 +u^\star \omega \right) dx \label{grut}
\end{align}
satisfies 
\[
-\Dl u =\Omega(u) \cdot \nabla u
\]
with 
\[
\Omega^i_{j}(u)=\left(A^i_{jk}(u)-A^j_{ik}(u) \right) \nabla u^k+\frac14 \left(\lambda^i_{jk}(u) -\lambda^j_{ik}(u) \right) \nabla^\perp u^k,
\]
where $A,\lambda \in C^0(B_1,M_n \otimes \bigwedge^1 \R^2)$ satisfy for every $i,k\in \{ 1,\ldots,n\}$ 
\begin{align}
\sum_j A^j_{ik} (u)\nabla u^j=0 \label{ortho} 
\end{align}
and for all $i,j,k \in \{1,\ldots,n\}$ we have
\[
A^i_{jk}(u)=A^i_{kj}(u),\,\ \lambda^i_{jk}(u)=-\lambda^i_{kj}(u)\ \ \ \text{and} \,\ \lambda^i_{jk}(u)=-\lambda^j_{ik}(u).
\]
Moreover, we have
\[
\Om(u) \cdot \nabla^\perp u=0.
\]
\end{lemma}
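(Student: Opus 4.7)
The approach is to compute the Euler--Lagrange equation for $F$ and then rewrite its right hand side in the antisymmetric form of the lemma via carefully chosen ambient extensions of $A$ and $\lambda$. To begin, I would take a compactly supported $v\in C^\infty_c(B_1,\R^n)$ tangent to $N$ at $u$ and compute $\tfrac{d}{dt}F(u_t)|_{t=0}$ for a corresponding $N$-valued variation $u_t$ (obtained for instance by composing $u+tv$ with the nearest point projection $\pi:U\to N$ from a tubular neighbourhood of $N$). The Dirichlet term contributes $\int(-\Delta u)\cdot v$ after integration by parts. For the two-form term, writing $\omega=\tfrac12\omega_{ij}\,dy^i\wedge dy^j$ with $\omega_{ij}=-\omega_{ji}$, the second order derivatives of $u$ appearing after integration by parts cancel by antisymmetry, leaving the first order contribution $\tfrac12\int v^k(d\omega)_{kij}(u)u^i_xu^j_y$. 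Since $\partial_xu,\partial_yu\in T_uN$, the ambient identity $(-\Delta u)^\perp=A(u)(\nabla u,\nabla u)$ holds, so combining with the Euler--Lagrange condition that $-\Delta u+\tfrac12 d\omega(u)(u_x,u_y)$ be normal to $T_uN$ gives
\[
-\Delta u=A(u)(\nabla u,\nabla u)-\tfrac12\bigl[d\omega(u)(u_x,u_y)\bigr]^T.
\]

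The central technical step is constructing $A$ and $\lambda$ on a tubular neighbourhood of $N$ with the required symmetries. For $A$ one uses the standard normal-valued second fundamental form, which directly supplies $A^i_{jk}=A^i_{kj}$ together with $\sum_j A^j_{ik}(u)T^j=0$ for tangent $T$; the orthogonality condition \eqref{ortho} then follows since each $\partial_xu,\partial_yu$ is tangent to $N$. For $\lambda$ I would set $\lambda^i_{jk}(p):=\tfrac12\bigl(d(\pi^*\omega)\bigr)_{ijk}(p)$. Since $d(\pi^*\omega)=\pi^*(d\omega)$ is a genuine $3$-form on $U$, its coefficients are automatically totally antisymmetric, yielding both $\lambda^i_{jk}=-\lambda^i_{kj}$ and $\lambda^i_{jk}=-\lambda^j_{ik}$; and because $d\pi_p$ restricts to the identity on $T_pN$ and annihilates the normal space, a short calculation shows $\lambda^i_{jk}(u)u^j_xu^k_y=\tfrac12\bigl[d\omega(u_x,u_y)\bigr]^T_i$, which is exactly the tangential term appearing in the Euler--Lagrange equation.

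With $A$ and $\lambda$ in hand, the antisymmetrisation proceeds in the spirit of Rivi\`ere. Orthogonality $\sum_jA^j_{ik}\nabla u^j=0$ lets one add zero and write $A^i_{jk}\nabla u^j\nabla u^k=(A^i_{jk}-A^j_{ik})\nabla u^k\cdot\nabla u^j$. For the $\lambda$-part, antisymmetry of $\lambda$ in $(j,k)$ together with the two-dimensional identity $u^j_xu^k_y-u^k_xu^j_y=-\nabla^\perp u^k\cdot\nabla u^j$ gives $\lambda^i_{jk}u^j_xu^k_y=-\tfrac12\lambda^i_{jk}\nabla^\perp u^k\cdot\nabla u^j$, and a second antisymmetrisation using $\lambda^i_{jk}=-\lambda^j_{ik}$ produces the coefficient $\tfrac14(\lambda^i_{jk}-\lambda^j_{ik})$. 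Collecting the two contributions reproduces the formula for $\Omega^i_j$ in the lemma statement.

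For the final assertion $\Omega(u)\cdot\nabla^\perp u=0$, each of the four resulting terms vanishes for a simple algebraic reason: $A^i_{jk}\nabla u^k\cdot\nabla^\perp u^j=0$ because $\nabla u^k\cdot\nabla^\perp u^j$ is pointwise antisymmetric in $(j,k)$ (a two-dimensional feature) while $A^i_{jk}$ is symmetric there; $A^j_{ik}\nabla u^k\cdot\nabla^\perp u^j=0$ because rotating \eqref{ortho} by $90^\circ$ gives $\sum_jA^j_{ik}\nabla^\perp u^j=0$; and both $\lambda$-terms vanish because $\nabla^\perp u^k\cdot\nabla^\perp u^j=\nabla u^k\cdot\nabla u^j$ is symmetric in $(j,k)$, whereas $\lambda^i_{jk}$ and $\lambda^j_{ik}=-\lambda^i_{jk}$ are both antisymmetric in those indices. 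The main obstacle I expect is precisely the construction of a totally antisymmetric $\lambda$ that matches the tangential projection of $d\omega$: a naive ambient extension of $d\omega$ would yield the full ambient coefficients rather than the tangential ones, whereas the pullback $\tfrac12 d(\pi^*\omega)$ uses the identity $d\pi^*=\pi^*d$ to build the tangential projection into the coefficients while simultaneously keeping them totally antisymmetric in all three indices.
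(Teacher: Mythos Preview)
Your proposal is correct and, for the only claim the paper actually proves---namely $\Omega(u)\cdot\nabla^\perp u=0$---your argument is essentially identical to the paper's: both expand the contraction and kill each term by pairing the symmetry of $A^i_{jk}$ (resp.\ antisymmetry of $\lambda^i_{jk}$) in $(j,k)$ against the antisymmetry (resp.\ symmetry) of $\nabla u^k\cdot\nabla^\perp u^j$ (resp.\ $\nabla^\perp u^k\cdot\nabla^\perp u^j$), and dispatch the $A^j_{ik}$ term via the orthogonality relation \eqref{ortho}.

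The difference is that the paper does not prove the first part at all: it simply cites Rivi\`ere's Theorem~I.2 for the existence of $A,\lambda$ with the stated symmetries and the form of $\Omega$. You instead give a self-contained construction, taking $A$ to be the ambient second fundamental form and $\lambda^i_{jk}=\tfrac12(d(\pi^*\omega))_{ijk}$ via the nearest-point projection. This is a genuine addition: the total antisymmetry of $\lambda$ is then automatic because $\pi^*(d\omega)$ is an honest $3$-form on the tubular neighbourhood, and your observation that $d\pi|_{T_pN}=\mathrm{id}$, $d\pi|_{N_pN}=0$ recovers exactly the tangential projection of $d\omega(u_x,u_y)$ needed in the Euler--Lagrange equation. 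So your route is more elementary and self-contained than the paper's citation, at the cost of tracking a few normalisation constants (the $\tfrac12$ versus $\tfrac14$ in front of the $\lambda$-term requires one more pass through the antisymmetrisation, which you sketch correctly).
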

\begin{proof}
All statements except the last one can be found in Theorem I.2 of \cite{Riv1}. In order to show the last claim we note that
\begin{align*}
\Om^i_{j}(u) \nabla^\perp u^j=& A^i_{jk}(u) \left(-\partial_x u^k \partial_y u^j+\partial_y u^k \partial_x u^j \right) \\
&-A^j_{ik}(u)  \left(-\partial_x u^k \partial_y u^j+\partial_y u^k \partial_x u^j \right)\\
&+ \frac12 \lambda^i_{jk}(u) \left(\partial_y u^k \partial_y u^j+\partial_x u^k \partial_x u^j \right)
\end{align*}
and this expression vanishes, since  the first term vanishes using the symmetry of $A^i_{jk}(u)$ and the antisymmetry with respect to $j$ and $k$ of the term in brackets. The same argument applies to the third term, and the second one vanishes because of \eqref{ortho}.
\end{proof}
\begin{remark}
\begin{itemize}
\item[1)] In the proof of \cite[Theorem 4.2]{LR} the crucial observation was that $\Om\cdot \D u$ is orthogonal to $\D u$, or equivalently that the Hopf differential is holomorphic, which was used in conjunction with the Pohozaev identity to convert $L^2$ angular control into $L^2$ radial control. We point out that $\Om\cdot \Db u = 0$ implies the former, moreover as stated earlier it can be understood as the ``curl" counterpart to the divergence type equation \eqref{omega} - see Remark \ref{dbaretc}.  
\item[2)] It was shown by Gr\"uter \cite{grueter1} that every conformally invariant variational integral with quadratic growth in the gradient can be written in the form \eqref{grut}.
\end{itemize}
\end{remark}

\section{Energy identities}
In this section we show that using the estimates we derived earlier, one can deduce various energy identities for sequences of solutions of the systems under consideration. These results improve the known energy identity results since we even get that no energy is lost for the gradients in the $L^{2,1}$-norm. 
\begin{proof}[Proof of Theorem \ref{EIL21}:]
It follows from Theorem 1.6 in \cite{ST} that the sequence $u_k$ is uniformly bounded in $W^{2,1}$ locally away from at most finitely many points $S=\{ x_i | i\in I\}$ at which the $L^2$-norm of $\Omega_k$ concentrates. Therefore, using Theorem 1.2 in \cite{ST}, $u_k$ converges weakly in $W^{1,2}(B_1,\R^n)$ and strongly in $W^{1,2}_{loc}(B_1\sm S,\R^n)$ to a limit $u$ which is a weak solution of the limiting system, as described in the statement of the Theorem. Here we use the strong convergence in $W^{1,2}$ away from $S$ and the fact that $S$ consists of at most fintely many points.
It follows again from Theorem $1.6$ in \cite{ST} that $u\in W^{2,1}_{loc}(B_1,\R^n)$.

Using the bubbling argument from the appendix, we see that we can reduce the analysis to the situation of one bubble forming at the origin. Away from the origin we have strong $W^{1,2}$ convergence of $u_k$ to $u$ and therefore we conclude the strong convergence of the gradients in the $L^2$-norm.

Now it follows from the definiton of a bubble which forms at the origin, that there exist sequences $x_k\to 0$ (wlog we assume $x_k\equiv 0$) and $r_k \to 0$ so that 
\[
\sup_{B_{r_k}(y) \subset B_{1/2}} \| \Om_k\|_{L^2(B_{r_k}(y))}=  \| \Om_k\|_{L^2(B_{r_k}(0))}=\eta_0 ,
\]
where $\eta_0$ is as in Theorem $1.2$ of \cite{ST}. Next we rescale the maps
\begin{align*}
\tilde u_k (x) =& u_k (r_kx),\\
\tilde \Om_k(x) =& r_k \Om (r_kx) \ \ \ \text{and}\\
\tilde f_k(x)=& r_k^2 f_k(r_k x)
\end{align*}
and it follows from Theorem 1.2 in \cite{ST} that $\tilde u_k$ converges strongly in $W^{1,2}(\R^2,\R^n)$ to a solution $\omega\in W^{2,1}_{loc}(\R^2,\R^n)$ of 
\begin{align*}
-\Dl \omega =& \tilde \Om \cdot \D \omega, 
\end{align*}
where $\tilde \Om \in L^2_{loc}(\R^2,\R^n)$ is the weak limit of $\tilde \Om_k$. 

In particular, this shows that the $L^2$-norm of $\D \ti{u}_k$ converges to the $L^2$-norm of $\D \omega$ in $B_{R}$, for every $R>0$.

In order to finish the proof of the first part of the Theorem, it therefore remains to show that the $L^2$-norm of $\nabla u_k$ on the annulus $A_{\delta,R,k}:=B_\delta \sm B_{Rr_k}$ converges to zero, as $k\to \infty$, $\delta \to 0$ and $R\to \infty$. 

From the bubbling argument, combined with Theorem \ref{secder}, and the estimate for the $L^{2,\infty}$-norm of $\nabla u_k$ from Lemma 3.1 in \cite{LR} or from Lemma 4.2 in \cite{LiZhu}, together with the estimate from Theorem 1.6 in \cite{ST} (which is used in order to show that the $L^2$-norm of $\nabla u_k$ converges to zero on dyadic annuli), we conclude that the $L^2$-norm of $\nabla u_k$ converges to zero on the annulus, as $k\to \infty$, $\delta \to 0$ and $R\to \infty$.

In order to prove the energy identity for the $L^{2,1}$-norms and the no-neck property, we note that the additional assumption $f_k\in L^p$, with uniform bounds, improves the above convergence results of $u_k$ to the weak limit $u$, and of $\tilde u_k$ to the bubble $\omega$ to local strong convergence in $W_{loc}^{1,q}(B_1\backslash S, \R^n)$ resp. $W_{loc}^{1,q}(\R^2, \R^n)$ by using Theorem 1.1 in \cite{ST}. Hence it follows that the $L^{2,1}$-norms of $\D u_k$ resp. $\D \tilde u_k$ converge to the corresponding quantities of $\D u$ resp. $\D \omega$ away from the origin resp. in $B_R$ for every $R>0$.
Next, using the fact that the $L^2$-norm of $\nabla u_k$ and the $L^{2,1}$-norm of $|\phi_k|^{1/2}$ converge to zero on the annulus, another application of Theorem  \ref{secder} then yields that also the $L^{2,1}$-norm of $\nabla u_k$ converges to zero on the annulus and this finishes the proof of the second energy identity \eqref{energyequality10}.

Finally, in order to show the no-neck property, we note that one can extend the map \[
u_k-\mint_{A_{\delta/2,2R,k}} u_k:A_{\delta/2,2R,k}\to \R^n
\] 
to a map $\tilde u_k$ with compact support in $A_{\delta,R,k}$, so that for every $p\in (1,\infty)$
\[
\|\D \tilde u_k \|_{L^p(\R^2)} \le C \| \D u_k\|_{L^p(A_{\delta/2,2R,k})},
\]
where $C$ doesn't depend on $\delta$, $R$ and $k$. From the Marcinkiewicz interpolation theorem we then conclude the estimate
\[
\|\D \tilde u_k \|_{L^{2,1}(\R^2)} \le C \| \D u_k\|_{L^{2,1}(A_{\delta/2,2R,k})}.
\]
Next we use the Sobolev embedding theorem (see Theorem 3.3.4 in \cite{helein02}) to conclude
\begin{align*}
\text{osc}_{A_{\delta/4,4R,k}} u_k =&  \text{osc}_{A_{\delta/4,4R,k}} (u_k -\mint_{A_{\delta/2,2R,k}} u_k)\\ 
\le& 2\|\tilde u_k \|_{L^\infty(B_{\delta/4})}\\
\le& C\|\D \tilde u_k \|_{L^{2,1}(\R^2)} \\
\le& C \| \D u_k\|_{L^{2,1}(A_{\delta/2,2R,k})} \to 0,
\end{align*}
which finishes the proof of the no neck property. 

The fact that the bubbles are weakly conformal follows again by the convergence to zero of the Hopf differential on bubble domains. 
\end{proof}
\begin{remark}\label{remark}
It follows from Proposition \ref{omegaperp} that Theorem \ref{EIL21} applies in particular to sequences of solutions $u_k$ of the system
\begin{align*}
-\Dl u_k =& \Om_k \cdot \D u_k + f_k\ \ \ \text{and} \\
0=& \Om_k \cdot \nabla^\perp u_k+g_k
\end{align*}
under the assumption that there exists a constant $\Lambda>0$ so that for every $k\in \N$
\[
\int_{B_1} \left( |\D u_k|^2+|\Om_k|^2+|f_k|^2 +|g_k|^2 \right)dx \le \Lambda.
\]
In particular, using Lemma \ref{crit}, we get new energy identites for sequences of approximate harmonic maps with tension fields bounded in $L^2$ and for sequences of surfaces with prescribed mean curvatures in $L^\infty$.
 
We also remark that the $L^2$-energy identity \eqref{energyequality9} was previously shown in Theorem 0.3 of \cite{LR} for sequences of critical points of every conformally invariant Lagrangian with quadratic growth in the gradient. 
\end{remark}
Imposing additional growth conditions on $\Om_k$ and its first derivative, which are for example satisfied by sequences of approximate harmonic maps with tension fields bounded in $W^{1,2}$ and by sequences of immersions with prescribed mean curvature in $W^{1,\infty}$, we can additionally conclude an energy identity for the second derivative in $L^1$.
\begin{theorem}\label{EIW21}
Let $u_k\in W^{1,2}(B_1,\R^n)$ be a sequence of solutions of 
\begin{align*}
-\Dl u_k =& \Om_k \cdot \D u_k + f_k,
\end{align*}
where $f_k\in W^{1,2}(B_1,\R^n)$ and we assume that there exists a constant $\Lambda>0$ so that for every $k\in \N$
\[
\int_{B_1} \left( |\D u_k|^2+|\Om_k|^2+|f_k|^2+|\nabla f_k|^2 \right)dx+ \| |\phi_k|^{1/2}\|_{L^{2,1}(B_1)} \le \Lambda.
\]
Moreover, we assume that for every $x\in B_1$, 
\begin{align*}
|\Omega(x)|\le& C|\nabla u_k(x)|,\\
|\nabla \Omega(x)|\le& C(|\nabla u_k(x)|^2 +|\nabla^2 u_k(x)|).
\end{align*}
and 
\[
\| |\phi_k|^{1/2}\|_{L^{2,1}(Z)} \to 0
\]
for every subset $Z\subset B_1$ with $|Z| \to 0$. Then there exists a subsequence, still denoted by $u_k$, $\Om_k$ and $f_k$, so that $u_k \rightharpoonup u \in W^{3,p}(B_1,\R^n)$, for all $1\le p<2$, weakly in $W^{1,2}(B_1,\R^n)$, $\Omega_k \rightharpoonup \Om$ and $f_k \rightharpoonup f$ weakly in $W^{1,2}(B_1,\R^n)$ resp. $L^2(B_1,\R^n)$ and the limits are solutions of 
\begin{align*}
-\Dl u =& \Om \cdot \D u + f.
\end{align*}
Moreover there exist at most finitely many $\omega$-bubbles $\omega^{i,j}:\R^2\to \R^n$, $1\leq i \leq p$, $1\leq j\leq j_i$, i.e. solutions of
\[
-\Delta \omega^{i,j} = \Om^{i,j} \cdot \D \omega^{i,j},
\]
which are conformal (as in Thereom \ref{EIL21}), sequences of points $x_k^{i,j} \in B_1$, $x_k^{i,j} \to x_i$, and sequences of radii $t_k^{i,j}\in \R_+$, $t_k^{i,j} \to 0$ as in Theorem \ref{EIL21}, such that for every $r<1$ with $\{x_1,\ldots,x_p\} \in B_r$ we additionally have
\begin{align}
\lim_{k\to \infty} \|\nabla^2 u_{k}\|_{L^{1}(B_r,\R^n)}&=\| \nabla^2 u\|_{L^{1}(B_r,\R^n)}+\sum_{i=1}^p\sum_{j=1}^{j_i} \|\nabla^2 \omega^{i,j}\|_{L^{1}(\R^2,\R^n)}.  \label{energyequality10a}
\end{align}
\end{theorem}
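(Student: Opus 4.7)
The plan is to bootstrap the conclusions of Theorem \ref{EIL21} and combine them with the neck estimate from Theorem \ref{secder}. The hypotheses here imply those of Theorem \ref{EIL21}: the two-dimensional Moser--Trudinger embedding makes $f_k\in W^{1,2}(B_1)$ uniformly bounded in every $L^q(B_1)$, $q<\infty$, hence in $L\log L$, while the concentration-compactness assumption on $|\phi_k|^{1/2}$ is built in. Along a subsequence we therefore obtain the weak limits $u,\Om,f$, the finite collection of bubbles $\om^{i,j}$ with centers $x_k^{i,j}$ and scales $t_k^{i,j}$, the $L^2$- and $L^{2,1}$-energy identities, and the no-neck property together with conformality of the bubbles. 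As in the proof of Theorem \ref{EIL21} we reduce to the case of a single bubble at the origin and split $B_r$ into body, bubble and neck regions as in the appendix.

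\medskip

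The second step is to upgrade the convergence on body and (rescaled) bubble regions from $W^{1,q}$ to $W^{2,1}$ in a strong sense. Away from the concentration set $S$, Theorem $1.6$ in \cite{ST} combined with $f_k\in L^q$ uniformly gives uniform local $W^{2,q}$ bounds on $u_k$ for every $q<\infty$, and the analogous statement for $\ti{u}_k(\cdot):=u_k(r_k\cdot)$ on every $B_R$. Since $|\Om_k|\le C|\D u_k|$, a Sobolev embedding then gives $\Om_k\in L^\infty_{loc}$, and the second growth condition places $\D\Om_k$ in $L^q_{loc}$ for every $q<\infty$. Differentiating the equation,
\[
-\Dl(\de u_k)=\de\Om_k\cdot \D u_k+\Om_k\cdot \de\D u_k+\de f_k,
\]
shows that the right-hand side is uniformly in $L^p_{loc}$ for every $p<2$, yielding $u_k\in W^{3,p}_{loc}(B_1\sm S)$ and, after passing to the limit and using that $S$ is finite together with a removal of the residual singularities, $u\in W^{3,p}(B_1)$ and $\om^{i,j}\in W^{3,p}_{loc}(\R^2)$. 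In particular the convergence on body and bubble regions is strong in $W^{2,1}$, so the respective $L^1$-norms of $\D^2 u_k$ converge to those of $\D^2 u$ and $\D^2 \om^{i,j}$.

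\medskip

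Finally, on each neck annulus $A_k:=B_{\delta}\sm B_{R r_k}$ we apply Theorem \ref{secder}, whose dyadic smallness hypothesis on $\|\Om_k\|_{L^2}$ is guaranteed by the neck construction in the appendix. Each term on the right-hand side of that estimate vanishes in the limit $k\to\infty$, $\delta\to 0$, $R\to\infty$: $\|\D u_k\|_{L^2(A_k)}$ by the $L^2$-energy identity, $\|\Om_k\|_{L^2(A_k)}$ by $|\Om_k|\le C|\D u_k|$, $\|f_k\|_{L\log L(A_k)}$ because $f_k$ is uniformly in $L^q$ for every $q<\infty$ and $|A_k|\to 0$, and $\||\phi_k|^{1/2}\|_{L^{2,1}(A_k)}$ by the concentration-compactness hypothesis. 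Hence $\|\D^2 u_k\|_{L^1(A_k)}\to 0$, and summing the body, bubble and neck contributions yields \eqref{energyequality10a}.

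\medskip

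The main difficulty is the bootstrap to $W^{3,p}$-regularity for $p<2$: one must verify that each pointwise product in the differentiated equation has the correct integrability, which is precisely where both growth assumptions on $\Om_k$ are used, and the bounds must be uniform so that the convergence on body and bubble regions is strong in $W^{2,1}$ rather than only weak. The neck step is conceptually cleaner once Theorem \ref{secder} is in hand, but still relies on uniform integrability of both $f_k$ in $L\log L$ and of $|\phi_k|^{1/2}$ in $L^{2,1}$ on shrinking sets.
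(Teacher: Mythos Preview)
Your proposal is correct and follows essentially the same approach as the paper: both arguments bootstrap via the growth conditions on $\Om_k$ and differentiation of the equation to obtain uniform local $W^{3,p}$ bounds (for $p<2$) away from the concentration set, deduce strong $W^{2,1}$-convergence on body and bubble regions, and then invoke Theorem \ref{secder} to show that the $L^1$-norm of $\D^2 u_k$ vanishes on the necks. One minor point: the $W^{2,q}$ bounds you cite come from Theorem 1.1 in \cite{ST} (the $L^p$-regularity result) rather than Theorem 1.6 (which is the $W^{2,1}$ statement), but this does not affect the argument.
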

\begin{proof}
Note that under the assumptions on $\Omega$ it follows first from Theorem 1.1 in \cite{ST} that $\nabla u_k\in L^q$ and then $\nabla^2 u_k \in L^q$, for every $1<q<\infty$, away from at most finitely many points with uniform bounds depending only on $\Lambda$. Differentiating the equation then yields a uniform bound for $\nabla^3 u_k\in L^p$, $1\le p<2$, away from at most fintely many points.
Once this higher regularity result has been obtained, we can then copy the proof from above Theorem \ref{EIL21} since we now have strong $W^{2,1}$-converge of $u_k$ to the weak limit $u$ away from the bubbles, and we also have strong $W^{2,1}$-convergence of $u_k$ to the bubbles $\omega^{i,j}$. In the intermediate annular regions we use Theorem \ref{secder} in order to conclude that the $W^{2,1}$-norm of $\nabla u_k$ converges to zero in these regions.
\end{proof}
\begin{remark}
As in Remark \ref{remark} we can replace the condition on the Hopf differential by imposing that $\Om_k\cdot \Db u_k = g_k$ is uniformly bounded in $L^2$ and we can make the same conclusion. In particular this applies to solutions of critical points of conformally invariant integrals, and especially sequences of harmonic mappings. 
\end{remark}

\section{The Equation on surfaces}
Here let $(\Sigma, h)$ denote a Riemann surface equipped with a metric $h$ and we consider a map $F\in W^{1,2}(\Sigma,\R^n)$ and $\Om\in L^2(\Sigma,so(n)\otimes\bigwedge^1 \T^{\ast}\Sigma)$ solving 
\begin{eqnarray*}
\Dl_h F = \Om\cdot_h \ed F \,\,\,\,\,\,\,\,\text{and}\,\,\,\,\,\,\,\,\,
\Om\wedge \ed F = 0, 
\end{eqnarray*}
where $\Delta_h$ is the Laplace-Beltrami operator on $M$ and $\cdot_h$ denotes the multiplication of one-forms with respect to the metric $h$. If we take any isothermal coordinates $\psi:B_1\to U$ on $U\In\Sigma$ we may consider $F\in W^{1,2}(B_1,\R^n)$, $\Om\in L^2(B_1,so(n)\otimes \bigwedge^1 \T^{\ast}\R^2)$ solving 
\begin{eqnarray*}
&-\Dl F = \Dl_{\dl} F =\Om \cdot_{\dl} \ed F = \Om\cdot \ed F&\,\,\,\,\,\text{and}\\
&\Om\wedge \ed F = 0 &
\end{eqnarray*}
where $h=e^{2w}\dl$ in these coordinates for some function $w:B_1\to \R$. It is not difficult to check that for any $V\In B_1$,  $\|\D F\|_{L^2(V,\dl)} = \|\D F\|_{L^2(\psi(V),h)}$, $\|\Om\|_{L^2(V,\dl)} = \|\Om\|_{L^2(\psi(V),h)}$ and therefore we use either of these norms without ambiguity. We also have some uniform constant $C>0$ such that 
$$\|Hess_h(F)\|_{L^1(\psi(V),h)} \leq C(\|\D^2 F\|_{L^1(V,\dl)} +\|\D F\|_{L^{2,1}(V,\dl)}\|\D w\|_{L^{2,\infty}(V.\dl)}).$$

Thus, using Theorem \ref{main theorem}, we may conclude the following: 

\begin{theorem}\label{global_surf}
Let $(\Sigma, h)$ denote a compact Riemann surface equipped with a metric $h$, and maps $F\in W^{1,2}(\Sigma,\R^n)$, $\Om\in L^2(\Sigma,so(n)\otimes\bigwedge^1 \T^{\ast}\Sigma)$ solving 
\begin{eqnarray*}
\Dl_h F = \Om\cdot_h \ed F \\
\Om\wedge \ed F = 0 
\end{eqnarray*}
Now suppose that there exists a finite atlas (with $K$ elements, say)  of isothermal coordinates over simply connected domains such that the conformal factors $w_i\in W^{1,(2,\infty)}(B_1)$. Then there exists some \[
C=C(\|\Om\|_{L^2(\Sigma)}, \|\D F\|_{L^2(\Sigma)},\max_i \|\D w_i\|_{L^{(2,\infty)}(B_1)}, K)<\infty
\] 
such that 
\[
\|Hess_h(F)\|_{L^{1}(\Sigma)} \leq C.
\]
\end{theorem}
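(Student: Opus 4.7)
The plan is to reduce the global estimate to a local Euclidean estimate in each chart of the given finite atlas and then patch the $K$ contributions together via the pointwise Hessian comparison already recorded in the excerpt. Since $\Sigma$ is compact, after possibly shrinking we may assume that the images $\psi_i(B_{1/2})$, $i=1,\dots,K$, already cover $\Sigma$.

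In an isothermal chart $\psi_i\colon B_1\to U_i\In \Sigma$ with $h=e^{2w_i}\dl$, the identities $\Dl_h = e^{-2w_i}\Dl_\dl$ and $\Om\cdot_h \ed F = e^{-2w_i}\,\Om\cdot \ed F$ make the conformal factor cancel from the governing equation, so $F\in W^{1,2}(B_1,\R^n)$ satisfies
\[
-\Dl F \;=\; \Om\cdot \D F \qquad \text{on } B_1.
\]
A direct computation on 1-forms in two dimensions shows that the 2-form $\Om\wedge \ed F$ vanishes if and only if the scalar $\Om\cdot \Db F$ vanishes, so the hypothesis $\Om\wedge\ed F=0$ is exactly the second equation of Theorem \ref{corollary theorem} with $g=0$. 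Thus $F$ solves the system of Theorem \ref{corollary theorem} on $B_1$ with $f=g=0$.

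Applying Theorem \ref{corollary theorem} with compact set $\overline{B_{1/2}}\In B_1$ in each chart yields
\[
\|\D^2 F\|_{L^1(B_{1/2})} + \|\D F\|_{L^{2,1}(B_{1/2})} \;\le\; C\bigl(\|\Om\|_{L^2(B_1)},\,\|\D F\|_{L^2(B_1)}\bigr).
\]
By the conformal invariance of the $L^2$ norms of $\Om$ and $\ed F$ observed in the excerpt, these right-hand sides are bounded by $\|\Om\|_{L^2(\Sigma)}$ and $\|\ed F\|_{L^2(\Sigma)}$, uniformly in $i$. I would then insert this into the pointwise comparison
\[
\|Hess_h(F)\|_{L^1(\psi_i(B_{1/2}),h)}\le C\bigl(\|\D^2 F\|_{L^1(B_{1/2},\dl)} + \|\D F\|_{L^{2,1}(B_{1/2},\dl)}\,\|\D w_i\|_{L^{2,\infty}(B_{1/2},\dl)}\bigr)
\]
from just before the theorem, replace $\|\D w_i\|_{L^{2,\infty}(B_{1/2})}$ by $\max_j\|\D w_j\|_{L^{2,\infty}(B_1)}$, and sum over the $K$ charts using subadditivity of the $L^1$ norm to obtain the asserted global bound.

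The main obstacle is essentially bookkeeping rather than new analysis: verifying the cancellation of the conformal factor in the PDE, recognising that the wedge-product hypothesis is exactly the perpendicular-dot-product hypothesis of Theorem \ref{corollary theorem}, and tracking the constants so that the final $C$ depends only on the quantities listed in the statement. All of the genuinely hard $L^1$/$L^{2,1}$ estimates are already encapsulated in Theorems \ref{main theorem} and \ref{corollary theorem}; since the target estimate is local and the atlas is finite, no bubbling or neck analysis is required at this stage.
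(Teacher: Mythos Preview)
Your proposal is correct and follows essentially the same route as the paper: the paper's argument consists of the discussion immediately preceding the theorem (conformal cancellation in the PDE, conformal invariance of the relevant $L^2$ norms, and the Hessian comparison inequality) together with the sentence ``Thus, using Theorem \ref{main theorem}, we may conclude the following,'' and you have simply written out these steps explicitly, invoking Theorem \ref{corollary theorem} in place of Theorem \ref{main theorem}, which is the more direct reference since the condition $\Om\wedge\ed F=0$ is exactly $\Om\cdot\Db F=0$. The only quibble is the clause ``after possibly shrinking we may assume that the images $\psi_i(B_{1/2})$ already cover $\Sigma$'': this need not hold for the given $K$ charts, so you should instead choose compact sets $K_i\Subset B_1$ with $\bigcup_i\psi_i(K_i)=\Sigma$ and apply Theorem \ref{corollary theorem} on each $K_i$.
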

Moreover, using Theorem \ref{EIL21}, it is easy to check that we have the following: 
\begin{theorem}
Let $(\Sigma, h_k)$ be a compact surface equipped with a sequence of smooth metrics. Let $F_k\in W^{1,2}(\Sigma,\R^n)$, $\Om_k\in L^2(\Sigma,so(n)\otimes\bigwedge^1 \T^{\ast}\Sigma_k)$ solve 
\begin{eqnarray*}
\Dl_{h_k} F_k = \Om_k\cdot_{h_k} \ed F_k \\
\Om_k\wedge \ed F_k = 0. 
\end{eqnarray*}
Assuming that the metrics $h_k$ converge smoothly to some limit metric $h$ and that there exists a constant $\Lambda <\infty$ so that
$$\|\Om_k\|_{L^2(\Sigma,h_k)} + \|\D F_k\|_{L^2(\Sigma, h_k)} \leq \Lambda, $$ 
there exist limits $F_{\infty}:\Sigma \to \R^n$, $\Om_{\infty}\in L^2(\Sigma,so(n)\otimes \bigwedge^1 \T^{\ast}\Sigma)$ and a collection of $\Om^i$ bubbles $\om^i:(S^2,h_{round})\to \R^n$ with $\Om^i\in L^2 (S^2,so(n)\otimes \bigwedge^1\T^{\ast}S^2)$, such that: 
$$\Dl_h F_{\infty} = \Om_{\infty} \cdot_h \ed F_{\infty}, \,\,\,\,\,\,\,\Dl \om^i = \Om^i \cdot \ed \om^i,$$
$$\Om_{\infty}\wedge \ed F_{\infty} = 0,\,\,\,\,\,\,\,\,\,\,\,\Om^i\wedge \ed \om^i = 0$$
$$\lim_{k\to\infty} \|\D F_k\|_{L^2(\Sigma,h_k)}^2 = \|\D F_{\infty} \|_{L^2(\Sigma,h)}^2 + \sum_i \|\D \om^i \|_{L^2(S^2)}^2$$ and the collection $F_{\infty}(\Sigma)\cup_i \om^i(S^2)$ is connected without necks. 

\end{theorem}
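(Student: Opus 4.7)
The plan is to reduce to the local Theorem \ref{EIL21} via a fixed finite isothermal atlas on $\Sigma$, exploiting that the coupled condition $\Omega_k \wedge dF_k = 0$ is exactly the supplementary equation of Proposition \ref{omegaperp} with $g_k \equiv 0$, which controls the Hopf differentials uniformly.

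Using the smooth convergence $h_k \to h$, one selects a finite atlas $\{(U_\alpha, \psi_\alpha)\}_{\alpha=1}^K$ on $\Sigma$ with $\psi_\alpha : B_1 \to U_\alpha$ such that, for every $k$ large, $\psi_\alpha$ is isothermal for $h_k$ with conformal factor $e^{2 w_{k,\alpha}}$, and the $w_{k,\alpha}$ are uniformly bounded in every $C^m(B_1)$. In each chart the intrinsic system pulls back to $-\Delta F_k = \Omega_k \cdot \nabla F_k$ together with $\Omega_k \cdot \nabla^\perp F_k = 0$ on $B_1$, both being conformally invariant in two dimensions; the $L^2$-norms of $dF_k$ and $\Omega_k$ in the chart coincide with the intrinsic ones, so the uniform bound $\Lambda$ is preserved.

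Proposition \ref{omegaperp} applied in each chart with $f \equiv g \equiv 0$ yields $\||\phi_k|^{1/2}\|_{L^{2,1}(B_\lambda)} \le C\lambda^{1/2}\|\nabla F_k\|_{L^2(B_1)}^2$ for every $\lambda \le 1/2$, which, together with Lorentz rearrangement, supplies both the uniform bound on $\||\phi_k|^{1/2}\|_{L^{2,1}}$ and the vanishing-on-small-sets condition required by Theorem \ref{EIL21}. I would then invoke Theorem \ref{EIL21} in each chart with $f_k \equiv 0$ (so the extra $L^p$ hypothesis is trivially satisfied) to extract, up to a subsequence, a chart limit $F_\infty^\alpha$ and $\Omega_\infty^\alpha$, finitely many concentration points per chart, finitely many conformal bubbles $\omega^{i,j} : \R^2 \to \R^n$, and the local $L^2$-energy identity together with the no-neck property. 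The chart limits agree on overlaps by uniqueness of weak $W^{1,2}$ limits and glue to a global $F_\infty \in W^{1,2}(\Sigma, \R^n)$ and $\Omega_\infty \in L^2$ solving the limit system; the pointwise condition $\Omega_\infty \wedge dF_\infty = 0$ passes to the limit because $F_k \to F_\infty$ strongly in $W^{1,2}_{loc}$ off the concentration set.

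Each bubble $\omega^{i,j}$ is weakly conformal with finite Dirichlet energy on $\R^2$, so pulling back by inverse stereographic projection yields a finite-energy map on $S^2 \setminus \{N\}$. Carrying the bubble-tree iteration to exhaustion ensures the local $L^2$-norm of $\Omega^{i,j}$ is small near $N$, whereupon Rivi\`ere's $\varepsilon$-regularity \cite{Riv1} together with the $W^{2,1}$ regularity of Theorem \ref{EIL21} gives a removable singularity, promoting $\omega^{i,j}$ to a genuine solution on $(S^2, h_{\mathrm{round}})$ of $\Delta \omega^{i,j} = \Omega^{i,j} \cdot d\omega^{i,j}$ with $\Omega^{i,j} \wedge d\omega^{i,j} = 0$. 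Summing the local $L^2$-energy identities over the finite atlas and using strong $W^{1,2}_{loc}$ convergence away from the concentration set to prevent double-counting on chart overlaps then yields the global energy identity, while the no-neck property is inherited directly from its local counterpart. The main bookkeeping point, and the only genuinely non-routine step, is arranging the atlas so that each concentration point sits in the interior of a single chart; this is always possible because both the atlas and the set of concentration points are finite.
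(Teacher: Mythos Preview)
Your strategy is exactly the one the paper has in mind: it states the theorem with the preamble ``using Theorem \ref{EIL21}, it is easy to check'' and gives no further proof, so reducing to the local result via isothermal charts and invoking Remark \ref{remark} (i.e.\ Proposition \ref{omegaperp} with $f=g=0$ to control $\phi_k$) is precisely the intended route.

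There is one technical slip. You claim a \emph{fixed} chart $\psi_\alpha$ can be isothermal for every $h_k$ simultaneously, i.e.\ $\psi_\alpha^*h_k = e^{2w_{k,\alpha}}\delta$ for all large $k$. This forces all $h_k$ to lie in the same conformal class, which is not assumed: smooth convergence $h_k\to h$ does not prevent the conformal structures from varying. The repair is routine: for each $k$ take isothermal charts $\psi_\alpha^k$ for $h_k$; since isothermal coordinates depend smoothly on the metric (via the Beltrami equation or the smooth dependence of the uniformising map), one can arrange $\psi_\alpha^k\to\psi_\alpha$ in $C^\infty$, where $\psi_\alpha$ is isothermal for $h$. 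The pulled-back equations are then exactly $-\Delta F_k=\Omega_k\cdot\nabla F_k$, $\Omega_k\cdot\nabla^\perp F_k=0$ on $B_1$ for each $k$, and the convergence of the charts lets you identify limits and bubbles on the fixed domain. Everything else in your outline (Hopf control, vanishing on small sets via the $L^{4,\infty}$ bound, removable singularity from \cite{LR}, arranging each concentration point inside a single chart to avoid double-counting) is correct.
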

\begin{remark}
We could of course have formulated these theorems involving $f,g$ as in the other parts of the paper, but for the sake of simplicity we stick to this case. 
\end{remark}
Examples of such solutions are harmonic maps from surfaces but one other area of potential interest is in the theory of $W^{2,2}$ conformal immersions - see e.g. \cite{Kuwert_Li}, \cite{LR2}. Given an arbitrary Riemann surface equipped with a metric of constant curvature $(\Sigma,h)$ (where $h=-1,0,1$ and $Area(\Sigma,h)=1$ when $h=0$) and a compact Riemannian manifold isometrically embedded $N\emb\R^n$ then we say that $F\in W^{2,2}_{conf}((\Sigma,h),N)$ if in any local conformal parameter the induced metric $g_{ij} = e^{2w}h_{ij}=e^{2\hat{w}}\dl_{ij}$ has $w\in L^{\infty}_{loc}$ and $F\in W^{2,2}(\Sigma,N)$ is an immersion, and finally 
$$\int_{\Sigma} |\A|^2 \ed V_g \leq C<\infty$$where $\A$ is the second fundamental form of $F(\Sigma)\In N$.


If we let $H:\Sigma\to \R^m$ denote the mean curvature vector of $F$ in $N$ then we have 
$$\Dl_g F = 2H + A(F)(\D_g F, \D_g F)$$ where $A$ is the second fundamental form of $N\emb \R^n$, so that on each conformal chart we have 
$$-\Dl F = 2H e^{2\hat{w}} + A(F)(\D F, \D F) = H |\D F|^2+ A(F)(\D F, \D F).$$ 
Following \cite{sharp} if we let $\Om^i_j : = H^i \ed F^j - H^j \ed F^i + (A^i_{jk}(F) - A^j_{ik}(F))\ed F^k$ then we are in the situation as above for $u=F$, $\|\Om\|_{L^2(\Sigma)}\leq C( \|H\|_{L^2(\Sigma)}+ \|\D F\|_{L^2(\Sigma)})$ and $\|\D F\|^2_{L^2(\Sigma)} = Area (F(\Sigma)).$ 

We note that in a similar setting (and even for sequences of possibly degenerating surfaces), an energy identity and no-neck result has been obtained previously by Chen and Li \cite{chenli}. 

Here we recover the following which uses the results \cite[Theorem 3.1]{LR2}, \cite[Lemma II.2]{RM}, and the results from this paper. 
\begin{theorem}
Let $F_k\in W^{2,2}_{conf}((\Sigma_k,h_k),N)$ be such that
$$\int_{\Sigma_k} |\A_k|^2 \ed V_{g_k} + Area(F_k(\Sigma_k)) \leq \Lambda<\infty.$$ Then there exists some $C=C(\Lambda,N)$ such that 
$$\|Hess_{h_k}(F_k)\|_{L^1(\Sigma_k)}\leq C$$ moreover we get a bubble tree, energy identity, no necks result and a limiting branched conformal immersion on a stratified Riemann surface. 
\end{theorem}

We end this section with the following remark: we could consider simply $F\in W^{1,2}(\Sigma,\R^n)$ that is weakly conformal with respect to $h$. We would say that $F$ has mean curvature in $L^2$ if there exists some function $H\in L^2(\Sigma,\R^n)$, $\la H, \D F\ra = 0$ almost everywhere, and 
$$\tau_g(F)=\Dl_g F - A(F)(\D_g F, \D_g F) = 2H$$ weakly. Now letting $\Om$ be as above we are back in the situation of Theorem \ref{global_surf}. In this second case we say that $F$ is a weak conformal immersion with bounded Willmore energy - an open question here is whether the zero set of $\D F$ consists of isolated points? Or can we say that $F\in W^{1,\infty}$? If both of these are true then $F$ would be a branched $W^{2,2}$ conformal immersion.


\appendix

\section{The bubbling argument}

We present a bubbling argument in order to decompose $B_1$ into a collection of bubble, neck and body regions which are defined below. This decomposition, together with Theorem \ref{secder}, is the key ingredient in the proof of the global energy estimate and the enrgy identities. 
Similar constructions can be found in \cite{DingTian}, \cite{Parker}, \cite{LR} and \cite{bernardriviere}.

Here we are interested in a sequence of solutions $u_k\in W^{1,2}(B_1, \R^n)$ of
\[
-\Delta u_k = \Omega_k \cdot \nabla u_k +f_k,
\] 
where $\Omega_k \in L^2(B_1, so(n) \otimes \bigwedge^1 \R^2)$, $f_k \in L\log L(B_1,\R^n)$ with 
$$\|\D u_k\|_{L^2(B_1)} +\|\Om_k\|_{L^2(B_1)}+\|f_k\|_{L\log L(B_1)} + \||\phi_k|^{\fr12}\|_{L^{2,1}(B_{1})}\leq\Lambda<\infty.$$

By the regularity results in \cite{ST} (see Theorem 1.6) we know that there is some uniform $\eps >0$ such that if the objects above solve the PDE on any domain $U\In\R^2$ whenever 
$$\rho_k := \inf\{\rho>0 | \sup_{B_{\rho}(x)\In U} \|\Om_k \|_{L^2(B_{\rho}(x))} = \eps\} \geq \al >0,$$
independently of $k$, then we get $W^{2,1}$ estimates for $u_k$ locally on $U$, and thus $u_k\to u\in W^{1,2}_{loc}(U)$ strongly, where $u$ is a solution of the limit equation
\[
-\Delta u =\Omega \cdot \D u+f
\] 
as described in the proof of Theorem \ref{EIL21}.

\paragraph{\bf{The basic set-up}} We use the following standard bubbling argument:  

Define $\rho_{k^1}$ and $x_{k^1}$ by 
$$\rho_{k^1} := \inf\{\rho>0 |\|\Om_k\|_{L^2(B_{\rho_{k^1}}(x_{k^1}))}= \sup_{B_{\rho}(x)\In B_1} \|\Om_k \|_{L^2(B_{\rho}(x))} = \eps\},$$ and let $U_{k^1} = B_{\rho_{k^1}}(x_{k^1})$.  

We assume $\rho_{k^1}\to 0$ - if not we have a global $W^{2,1}$ estimate and no bubbling phenomenon occurs. Moreover we pick a subsequence so that $x_{k^1}\to x^1\in \overline{B}_1$ and if $x^1\in \de B_1$ then we discard this sequence. 

Now we pick $\rho_{k^2}$ and $x_{k^2}$ such that 
 $$\rho_{k^2} := \inf\{\rho>0 |\|\Om_k\|_{L^2(B_{\rho_{k^2}}(x_{k^2})\sm U_{k^1})}= \sup_{B_{\rho}(x)\In B_1} \|\Om_k \|_{L^2(B_{\rho}(x)\sm U_{k^1})} = \eps\}$$
and set $U_{k^2} : = B_{\rho_{k^2}}(x_{k^2})\cup U_{k^1}$. 

First we check if $\rho_{k^2}\to 0$ - if not we stop here. Once again we pick a subsequence so that $x_{k^2}\to x^2\in \overline{B}_1$ and if $x^2\in \de B_1$ then we discard this sequence.

Now we ask whether we have 
$$S^{12}_k:= \left(\fr{\rho_{k^2}}{\rho_{k^1}} + \fr{|x_{k^1}-x_{k^2}|}{\rho_{k^2}}\right)\to \infty$$
or not. 

If $S_k^{12}$ remains bounded we discard this sequence - since this means that the energy on $B_{\rho_{k^2}}(x_{k^2})\sm B_{\rho_{k^1}}(x_{k^1})$ is contributing to the ``first bubble". 

If $S^{12}_k$ becomes unbounded we remember the sequence because it means either $x_{k^1}$ and $x_{k^2}$ converge to different points, or they converge to the same point at different scales, or they converge to the same point at the same scale but the scales go to zero so quickly (compared to their respective rate of convergence) that they remain conformally very far from each other!


We now inductively continue to choose such scales and points $(\rho_{k^{i+1}}, x_{k^{i+1}})$ such that $$\rho_{k^{i+1}} := \inf\{\rho>0 |\|\Om_k\|_{L^2(B_{\rho_{k^{i+1}}}(x_{k^{i+1}})\sm U_{k^i})} = \sup_{B_{\rho}(x)\In B_1} \|\Om_k \|_{L^2(B_{\rho}(x)\sm U_{k^i})} = \eps\}$$ and $U_{k^{i+1}}:=B_{\rho_{k^{i+1}}}(x_{k^{i+1}})\cup_{j=1}^{i} U_{k^j} $.

Again, if $\rho_{k^{i+1}}$ does not converge to zero we stop.
If it does converge to zero, we take a convergent subsequence for the $x_{k^{i+1}}$ as above and also we check:
Does $S_k^{{l,i+1}}$ remain bounded for some $1\leq l\leq i$? 

If yes, forget the sequence. 

If no remember the sequence and carry on. Remember the construction of the $U_i$ is unaffected by this step - the only thing we do is decide whether to remember the sequence of scales and points - or not. 

This process eventually stops after finitely many iterations since each time we are taking away a fixed amount of energy and two such domains never overlap by construction. Let $Q$ denote the total number of distinct point-scale sequences and we have $Q\leq \fr{\Lambda}{\eps}$. 

We are left in the following situation: 
We have finitely many points $x^j$ and for each point we have a maximal set of  finitely many point-scale sequences $(\rho^j_{k^i}, x^j_{k^i})$ with $(S_j)_k^{il} \to \infty$ when $i<l$ and $x^j_{k^i}\to x^j$ for every $i$. Moreover at each scale we are accounting for a fixed amount of $\|\Om\|_{L^2}$ - we shall refer to these point-scale sequences as bubble sequences in the sequel as the below argument shows we end up with a bubble for each one. 

Notice also that if we let $\hat{u}^j_{k^i}(x) := u(x^j_{k^i} + \rho^j_{k^i}x)$ then this map is defined on larger and larger regions of $\R^2$

Setting $\hat{\Om}^j_{k^i}(x) := \rho^j_{k^i}\Om(x^j_{k^i} + \rho^j_{k^i}x)$ and $\hat{f}^j_{k^i}(x) := (\rho^j_{k^i})^2 f(x^j_{k^i} + \rho^j_{k^i}x)$ these objects solve 
$$-\Dl \hat{u}^j_{k^i} = \hat{\Om}^j_{k^i} \cdot \D \hat{u}^j_{k^i} + \hat{f}^j_{k^i}.$$ Thus by the scaling properties for the $L\log L$ - norm and also $\Om$ and $u$ (see e.g. \cite{ST}) we know that $\|\hat{\Om}^j_{k^i}\|_{L^2}$ only concentrates (if at all) at finitely many points $y^1,\dots y^l$ for $l<Q$; $\|\D \hat{u}^j_{k^i}\|_{L^2}$, $\|\hat{\Om}^j_{k^i}\|_{L^2}$ and $\| \hat{f}^j_{k^i}\|_{L\log L}$ are all uniformly bounded on their domains of definition, and moreover $\| \hat{f}^j_{k^i}\|_{L^1(U)} \to 0$ for any compact domain $U\in \R^2$. Thus by the compactness properties of the equation (see \cite{ST}, Theorem 1.2) and the singularity removal property (see \cite{LR}) we know that there is some map $w^j_i\in L^{\infty}$ with $\D w^j_i \in L^2$ and $\Om^j_i \in L^2$ solving 
$$-\Dl w^j_i = \Om^j_i \cdot \D w^j_i$$ and also $\hat{u}^j_{k^i} \to w^j_i$ strongly in $W^{1,2}_{loc}(\R^2\sm \{y^1,\dots y^l\})$ and $\hat{u}^j_{k^i} \to w^j_i$ is uniformly bounded in $W^{2,1}_{loc}(\R^2\sm \{y^1,\dots y^l\})$ - by Theorem 1.6 in \cite{ST}. We also know that (after a suitable choice of $\eps$) such solutions must have $\|\Om^j_i\|_{L^2}\geq 2\eps$ in order that $w^j_i$ is not a trivial (constant) solution - see \cite{LR}, Theorem 3.2.  

At this point we remark on the following improvements: 
\begin{itemize}
\item If $f_k\in L^p$ is uniformly bounded then we get $\hat{u}^j_{k^i} \to w^j_i$ strongly in $$W^{1,q}_{loc}(\R^2\sm \{y^1,\dots y^l\})$$ for all $q<\fr{2p}{2-p}$ (since the sequence $u_k$ will be uniformly bounded in $W^{2,p}_{loc}(\R^2\sm \{y^1,\dots y^l\})$). 
\item If we know $\Om_k\cdot \Db u_k = g_k \in L\log L$ is uniformly bounded then we get $\|\hat \Om_{k^i}^j \cdot \Db \hat u_{k^i}^j\|_{L^1} \to 0 $ locally in $\R^2$. Thus the limit bubble will satisfy $\Om^j_i \cdot \Db w^j_i = 0$ and is thus conformal. To see this notice that this condition implies the Hopf differential $\phi^j_i$ of $w^j_i$ is holomorphic and also $\|\phi^j_i\|_{L^1(\R^2)} < \infty$ giving that $\phi^j_i = 0$. 
\item In particular, using the above strong local convergence we end up with $\hat{\phi}^j_{k^i} \to 0$ locally strongly on $\R^2\sm  \{y^1,\dots y^l\}$ in $L^1$ if $f_k \in L\log L$, and in $L^{\fr{q}{2}}$ if $f_k \in L^p$ ($q>2$ as above). 
\end{itemize}

\paragraph{\bf{The covering argument}} 
We proceed by induction so first consider a single one of the $x^j$ as above. We know that there are finitely many ($Q^j$, say) point-scale sequences converging to this $x^j$. The aim of the argument below is to partition the set of bubble sequences in such a way as to separate different strings of bubbles forming at a point. In other words we separate out which bubbles are forming on which in order that the analysis and estimates do not interfere with the other strings of bubbles and we can reduce to an induction argument. 


First of all we re-label and order the $\rho^j_{k^i}$ so that $\rho^j_{k^1}\geq \rho^j_{k^2}\geq \dots \geq \rho^j_{k^{Q^j}}$. We partition the set according to the following scheme: 

Single out the largest bubble scale $(x^j_{k^1},\rho^j_{k^1})$ (first re-label it $(x^j_{k^{i_1}},\rho^j_{k^{i_1}})$) and for the remaining point-scale sequences $\{(x^j_{k^i}, \rho^j_{k^i})\}_{i=2}^{Q^j}$ we consider 
$$\lim_{k\to \infty} \fr{|x^j_{k^i} - x^j_{k^{i_1}}|}{\rho^j_{k^{i_1}}}.$$ 
Let $i$ be the first scale such that this is infinite - we group this with the first scale and re-label it $i_2$. Notice that for any $i_1=1<s<i_2$ we have 
$$\lim_{k\to \infty} \fr{|x^j_{k^s} - x^j_{k^1}|}{\rho^j_{k^{i_1}}}<\infty\,\,\,\,\,\,\,\,\text{and}\,\,\,\,\,\,\,\,\,\,\,\,\,\,\,\, \lim_{k\to\infty}\fr{\rho^j_{k^{i_1}}}{\rho^j_{k^s}}=\infty$$ therefore there exists some $\Gamma<\infty$ such that 
$$B_{\rho^j_{k^s}}(x^j_{k^s})\In B_{\Gamma \rho^j_{k^1}} (x^j_{k^1})$$
for sufficiently large $k$. Thus we would say that the bubble $w^j_s$ associated with the scale $(x^j_{k^s},\rho^j_{k^s})$ forms on the bubble associated with $(x^j_{k^{i_1}},\rho^j_{k^{i_1}})$, $w^j_{i_1}$. Now, continue this procedure: for  $\{(x^j_{k^i}, \rho^j_{k^i})\}_{i=i_2+1}^{Q^j}$ we consider 
$$\lim_{k\to \infty} \fr{|x^j_{k^i} - x^j_{k^{i_1}}|}{\rho^j_{k^{i_1}}} \,\,\,\,\,\,\,\,\text{and}\,\,\,\,\,\,\,\,\lim_{k\to \infty} \fr{|x^j_{k^i} - x^j_{k^{i_2}}|}{\rho^j_{k^{i_2}}}.$$ 
Let $i_3$ be the first scale such that both of these are infinite and again notice that for $i_2<s<i_3$ the bubble $w^j_s$ forms on either $w^j_{i_1}$ or $w^j_{i_2}$. 

Continue this procedure until we exhaust all the bubble sequences. We are left with the following strings of bubbles 
$$\{(w^j_{i_1}, \{w^j_{i_1^s}\}_{s=1}^{J_1}), (w^j_{i_2}, \{w^j_{i_2^s}\}_{s=1}^{J_2}),\dots (w^j_{i_L}, \{w^j_{i_L^s}\}_{s=1}^{J_L})\}$$
where for each $i_l$ the bubbles $\{w^j_{i_l^s}\}_{s=1}^{J_l}$ all form on $w^j_{i_l}$ and there is some uniform $\Gamma<\infty$ such that $$\cup_{s=1}^{J_l} B_{\rho^j_{k^{i_l^s}}}(x^j_{k^{i_l^s}})\In B_{\Gamma \rho^j_{k^{i_l}}} (x^j_{k^{i_l}})$$ for sufficiently large $k$. Call $\{w^j_{i_l}\}_{l=1}^L$ the initial bubbles - the ones closest to the body map upon which all other bubbles form in the bubble tree.


 

Now we find a covering argument for the collection of initial bubble domains:
$$\{B_{\Gamma \rho^j_{k^{i_l}}}(x^j_{k^{i_l}}) \}_{l=1}^L$$ 
thus ensuring that all bubble domains are covered. Let 
$$R^j_{k^1}:=2\sup_{l,t} |x^j_{k^{i_t}} - x^j_{k^{i_l}}|\to 0$$
and notice that by construction $\fr{R^j_{k^1}}{\rho^j_{k^i_l}} \to \infty$ for all $i_l$ and in particular there exists some $x^j_{k^1}\to x^j$ such that $$\cup_{l=1}^L B_{\Gamma \rho^j_{k^{i_l}}}(x^j_{k^{i_l}}) \In B_{R^j_{k^1}} (x_{k^1}^j).$$

Now we know that there exists some $\dl>0$ such that 
$$\sup_{R^j_{k^1}<\rho<\fr{\dl}{2}} \int_{B_{2\rho}(x_{k^1}^j)\sm B_{\rho}(x_{k^1}^j)} |\Om_k|^2 < \eps.$$
If this is not the case then we rescale by the radius $\rho_0$ for which
\[
 \int_{B_{2\rho_0}(x_{k^1}^j)\sm B_{\rho_0}(x_{k^1}^j)} |\Om_k|^2 \ge \eps
\]
and standard arguments then show that we have found a new point-scale sequence which is a contradiction. We call $B_{\dl}(x_{k^1}^j)\sm B_{R^j_{k^1}}(x_{k^1}^j)$ a neck domain - a degenerating annulus on which $\|\Om\|_{L^2}$ is small on every dyadic sub-annulus.

For two different sequences $x^j_{k^{i_l}}, x^j_{k^{i_t}}$ we consider $$ R^j_{k^{lt}}= \lim_{k\to \infty} \fr{R^j_{k^1}}{|x^j_{k^{i_l}}-x^j_{k^{i_t}}|} $$ and group together those for which $R^j_{k^{lt}} = \infty$ and for the rest we have $R^j_{k^{lt}} \leq M.$

Notice that there is at least one $R^j_{k^{lt}}$ which falls into the second category. Re-order so that the former have $1\leq l<t \leq I$ and the latter $I+1\leq t\leq L$. Now, there is a finite cover of $B_{R^j_{k^1}} (x_{k^1}^j)$ by balls of radius $\fr{R^j_{k^1}}{2M}$ for which each ball in the cover satisfies exactly one of the following conditions: 
\begin{itemize}
\item It contains a single initial bubble domain, i.e. it equals $B_{\fr{R^j_{k^1}}{2M}}(x^j_{k^{i_l}})$ when $I+1\leq l\leq L$ and it's Hausdorff distance to any other bubble domain is at least $\fr{R^j_{k^1}}{4M}$. 
\item It contains finitely many (but more than one - $L_b$, say)  initial bubble sequence for $1\leq l\leq I$ - label these balls $B_{\fr{R^j_{k^1}}{2M}}(y_{k^b})$
\item It's Hausdorff distance from all the bubble domains is at least $\fr{R^j_{k^1}}{4M}$, labelled $B_{\fr{R^j_{k^1}}{2M}}(e_{k^b})$ - we call these empty domains as they must have $$\|\Om_k\|_{L^2(B_{\fr{3R^j_{k^1}}{4M}}(e_{k^b}))} <\eps$$ (otherwise it would be a new point-scale sequence).  
\end{itemize}
For the first option, we are again in the situation where (upon possibly increasing the value of $\Gamma$) 
$$\sup_{\Gamma \rho^j_{k^{i_l}}<\rho<\fr{R^j_{k^1}}{4M}} \int_{B_{2\rho}(x^j_{k^{i_l}})\sm B_{\rho}(x^j_{k^{i_l}})} |\Om_k|^2 < \eps.$$ In other words  $$B_{\fr{R^j_{k^1}}{2M}}(x^j_{k^{i_l}})\sm B_{\Gamma \rho^j_{k^{i_l}}}(x^j_{k^{i_l}})$$ is a neck domain. 

For the middle option suppose there are $L_b>1$ bubble sequences in $B_{\fr{R^j_{k^1}}{2M}}(y_{k^b})$. Let 
$$R^j_{k^2} = 2\sup_{t<l\leq L_b} |x_{k^{i_t}}^j - x^j_{k^{i_l}}|\to 0$$ and run the argument as above but this time in the ball $B_{\fr{R^j_{k^1}}{2M}}(y_{k^b})$. This time we manage to partition the set of bubbles once again and we can cover the ball $\fr{R^j_{k^1}}{2M}(y_{k^b})$ by neck domains, empty domains and bubble domains.

If we continue this argument we are eventually in the situation where the second option above cannot happen. Thus we have managed to cover the ball $B_{R^j_{k^1}} (x^j)$ in a finite number of empty domains, neck domains and initial bubble domains. 

On each of the initial bubble domains we can inductively start all over again from the beginning of the covering argument. 

It should be clear now that a simple induction argument allows us to split the whole ball $B_1$ into a sequence of finitely many bubble domains, neck domains, empty domains and body domains.

The main body domain is the region $B_1\sm \{\cup_j B_{\dl}(x_{k^1}^j)\}$ for some uniform $\dl$ - on which we have uniform control on our maps according to the $\eps$-regularity theory. The remaining body domains are the bodies of the  bubbles which appear as we continue down our induction argument. On the main body domain we have uniform convergence to the limit map $u$ (in the appropriate sense), and on the bodies of the bubbles we have uniform convergence to the bubble (again, in the appropriate sense).  

The empty domains are of the form $B_{R^k}(x^k)$ for some $\{x^k\}\In B_1$ and $R^k\to 0$, moreover $\|\Om_k\|_{L^2(B_{R^k}(x^k))} <\eps$ and also the point-scale sequence $(x^k,R^k)$ is distinct from all others. We must have on each empty domain that $\|\D u_k\|_{L^2(B_{R^k}(x^k))} \to 0$ - since otherwise we could re-scale to find a new bubble and thus a new point-scale sequence that must carry away at least $\eps$ of $\|\Om\|_{L^2}$ (this follows from the gap result in Theorem 3.2 of \cite{LR}) - a contradiction. Moreover it is a consequence of the estimates in \cite{ST} that if $f_k$ is bounded in $L^p$ for some $p>1$ then $\|\D u_k\|_{W^{1,1}(B_{R^k}(x^k))} \to 0$.

The neck regions are therefore the only place we can lose track of our convergence and these are precisely the regions on which our main theorems apply. Moreover, each dyadic sub-annulus on a neck can be covered by finitely many empty regions, thus we have $\|\D u_k\|_{L^2} \to 0$ on such domains and if $f_k$ is bounded in $L^p$ for some $p>1$ then $\|\D u_k\|_{W^{1,1}} \to 0$.

\end{document}